\newtheorem{theorem}{Theorem}[section]
\newtheorem{lemma}[theorem]{Lemma}
\theoremstyle{definition}
\newtheorem{definition}[theorem]{Definition}
\newtheorem{proposition}[theorem]{Proposition}
\theoremstyle{remark}
\newtheorem{remark}[theorem]{Remark}
\numberwithin{equation}{section}
\begin{document}

\title{On scalar curvature rigidity of Vacuum Static Spaces}

\author{Jie Qing}
\address{(Jie Qing) Department of Mathematics, University of California, Santa Cruz, CA 95064}
\email{qing@ucsc.edu}

\author{Wei Yuan}
\address{(Wei Yuan) Department of Mathematics, University of California, Santa Cruz, CA 95064}
\email{wyuan2@ucsc.edu}




\keywords{vacuum static space, deformation of metrics, local and global scalar
curvature rigidity}

\thanks{Research of the authors supported by NSF grant DMS-1005295 and DMS-1303543.}

\begin{abstract}
In this paper we extend the local scalar curvature rigidity result in \cite{B-M} to a small domain on general vacuum static spaces, which confirms the interesting 
dichotomy of local surjectivity and local rigidity about the scalar curvature in general in the light of the paper \cite{Corvino}. We obtain the local scalar curvature 
rigidity of bounded domains in hyperbolic spaces. We also obtain the 
global scalar curvature 
rigidity for conformal deformations of metrics in the domains, where the lapse functions are positive, on vacuum static spaces with positive scalar curvature, 
and show such domains are maximal, which generalizes the work in \cite{H-W1}. 
\end{abstract}

\maketitle



\section{Introduction}
The positive mass theorem \cite{S-Y1,S-Y2,Witten} is a fascinating theorem that has been 
pivotal in mathematical relativity. The global scalar curvature rigidity of the Euclidean space $\mathbb{R}^n$ is at the core of the positive mass 
theorem for asymptotically flat manifolds. Analogously the global scalar curvature rigidity of hyperbolic space $\mathbb{H}^n$ is at the core 
of the positive mass theorem for asymptotically hyperbolic manifolds  \cite{Min-Oo1,A-D,Wang-1,C-H,ACG}. This led Min-Oo in \cite{Min-Oo2} to conjecture 
that some global scalar curvature rigidity should also hold for the round hemisphere $\mathbb{S}^n_+$.
\\

In the paper \cite{F-M}, Fischer and Marsden studied the deformations of scalar curvature and 
introduced the notion of static spaces that incidentally is the same notion of vacuum static spaces 
introduced in \cite{H-E,K-O} in mathematical relativity. Fischer and Marsden showed the local 
surjectivity for the scalar curvature as a map from the space of metrics to the space of functions at a non-static metric on a closed
manifold. Corvino in \cite{Corvino} considered compactly supported deformations of metrics and 
extended the local surjectivity result.  
In \cite{F-M}, Fischer and Marsden also observed the local scalar curvature rigidity of closed 
flat manifolds.
Their local scalar curvature rigidity states that, on a closed flat manifold, any metric with nonnegative
scalar curvature that is sufficiently close to a flat metric has to be isometric to the flat metric. This dichotomy of 
local surjectivity and local rigidity about scalar curvature seems extremely intriguing. One wonders if such dichotomy holds in
general based on the work of Corvino \cite{Corvino}.
\\

Min-Oo's conjecture attracted a lot of attentions among geometric analysts. 
It was remarkable that Brendle, Marques and Neves in \cite{B-M-N} 
(see also \cite{C-L-M} for a later developement) discovered that there is even no local scalar curvature rigidity of the round 
hemispheres and constructed counter-examples to Min-Oo's conjecture. Later, in a subsequent paper \cite{B-M}, 
Brendle and Marques established the local scalar curvature rigidity of round spherical caps of some appropriate size (cf. \cite{C-M-T, M-T} for a better
estimate on the size). These developments inspire us to study the local scalar curvature rigidity of general vacuum static spaces.

\begin{theorem}\label{rigidity_static_space}
Suppose that $(M^n,\bar{g}, f)$ ($n\geq 3$) is a vacuum static space. Let $p \in M$ and $f(p) \neq 0$. Then there exist $r_0 > 0$ such that, for each geodesic ball
$B_r(p)$,  there exists $\varepsilon_0>0$ such that, for any metric $g$ on $\overline{B_r(p)}$ satisfying,   
\begin{itemize}
\item $g = \bar{g}$ on $\partial B_r(p)$;
\item $R [g] \geq R [\bar{g}]$ in $B_r(p)$;
\item $H[g] \geq H[\bar g]$ on $\partial B_r(p)$;
\item $||g-\bar{g}||_{C^2(\overline{B_r(p)})} < \varepsilon_0$,
\end{itemize}
there exists a diffeomorphism $\varphi : {B_{r}(p)} \to {B_{r}(p)}$
such that $\varphi|_{\partial B_{r}(p)} = id$ and $\varphi^*g = \bar{g}$ in $B_{r}(p)$,  provided that $r<r_0$.
\end{theorem}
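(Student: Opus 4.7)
\noindent\emph{Proof sketch.} The plan is to extend the scheme of Brendle--Marques \cite{B-M} from the round spherical cap to a general vacuum static background, using the given lapse $f$ in place of the explicit potential $\cos r$. By hypothesis $f(p)\neq 0$, so after shrinking $r_0$ we may assume $f>0$ on $\overline{B_{r_0}(p)}$. The engine of the argument is the integration by parts formula
\[
\int_{B_r(p)} f\,L_{\bar g}(h)\,dv_{\bar g} \;=\; \int_{B_r(p)} \langle h,\, L^*_{\bar g}(f)\rangle\,dv_{\bar g} \;+\; \int_{\partial B_r(p)} \mathcal{B}[f,h]\,dS,
\]
where $L_{\bar g}$ is the linearization of the scalar curvature operator and $L^*_{\bar g}$ its formal adjoint. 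The vacuum static equation is precisely $L^*_{\bar g}(f)=0$, so the bulk term on the right drops out identically and only boundary data survives.

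\smallskip

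\noindent\emph{Step 1: Gauge fixing and conformal normalization.} First I would pull $g$ back by a diffeomorphism of $\overline{B_r(p)}$ that is the identity on $\partial B_r(p)$ and brings $h:=g-\bar g$ into a divergence-free gauge with respect to $\bar g$; on a small ball the associated elliptic system is solvable with $C^2$-size controlled by $\varepsilon_0$. Next I would solve the semilinear Dirichlet problem
\[
-\tfrac{4(n-1)}{n-2}\Delta_g u + R[g]\,u \;=\; R[\bar g]\,u^{\frac{n+2}{n-2}} \quad\text{in } B_r(p), \qquad u\equiv 1 \text{ on } \partial B_r(p),
\]
obtaining $u$ close to $1$ via the implicit function theorem (on a small ball the linearization is coercive). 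Set $\tilde g := u^{\frac{4}{n-2}}g$. Then $R[\tilde g]\equiv R[\bar g]$, $\tilde g=\bar g$ on $\partial B_r(p)$, and the maximum principle together with $R[g]\geq R[\bar g]$ yields $u\leq 1$ in $B_r(p)$, hence $\partial_\nu u\geq 0$ on the boundary. The conformal transformation law $H[\tilde g]=H[g]+\tfrac{2(n-1)}{n-2}\partial_\nu u$ then gives $H[\tilde g]\geq H[g]\geq H[\bar g]$.

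\smallskip

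\noindent\emph{Step 2: The main identity.} Writing $\tilde h:=\tilde g-\bar g$ and expanding
\[
R[\tilde g] - R[\bar g] \;=\; L_{\bar g}(\tilde h) + \mathcal{Q}(\tilde h,\nabla\tilde h),
\]
where $\mathcal{Q}$ collects the nonlinear remainders, I integrate against $f$ and use $\tilde h|_{\partial B_r(p)}=0$ together with $L^*_{\bar g}(f)=0$. The boundary contribution $\mathcal{B}[f,\tilde h]$ reduces, up to higher-order terms, to $2\int_{\partial B_r(p)} f(H[\tilde g]-H[\bar g])\,dS$, and since $R[\tilde g]\equiv R[\bar g]$ the left hand side vanishes. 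This yields
\[
0 \;=\; 2\int_{\partial B_r(p)} f\bigl(H[\tilde g]-H[\bar g]\bigr)\,dS \;+\; \int_{B_r(p)} f\,\mathcal{Q}(\tilde h,\nabla\tilde h)\,dv_{\bar g} \;+\; (\text{higher-order terms}).
\]
Since $f>0$ and the boundary integrand is nonnegative by Step 1, the bulk quadratic term must be nonpositive.

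\smallskip

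\noindent\emph{Step 3: Closing the argument and the main obstacle.} The crux is a coercivity estimate of the form
\[
\left|\int_{B_r(p)} f\,\mathcal{Q}(\tilde h,\nabla\tilde h)\,dv_{\bar g}\right| \;\leq\; C\,r\,\|\tilde h\|_{H^1(B_r(p))}^2,
\]
paired with a matching lower bound for the full identity in terms of $\|\tilde h\|_{H^1}^2$. The small factor $r$ enters through the Poincar\'e inequality on $H^1_0(B_r(p))$, available because $\tilde h$ vanishes on $\partial B_r(p)$; the $C^2$-smallness inherited from $\varepsilon_0$ ensures the nonlinear remainder is genuinely higher order. On the sphere, Brendle--Marques exploited explicit spherical harmonic expansions and the precise structure of $\cos r$; on a general vacuum static background no such coordinates exist, and the static equation $L^*_{\bar g}f=0$ combined with the nondegeneracy $f(p)\neq 0$ must substitute for those structural identities. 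The gauge is delicate precisely because one must prevent diffeomorphism zero-modes and the conformal trace mode from obstructing coercivity---this is the principal difficulty I expect. Once coercivity is in hand, the identity forces $\tilde h\equiv 0$, so $\tilde g=\bar g$; unwinding the conformal factor ($u\equiv 1$) and the diffeomorphism produces the required $\varphi$ with $\varphi^*g=\bar g$.
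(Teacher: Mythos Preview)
Your outline follows the Brendle--Marques template, but Step~3 misidentifies where the coercivity comes from, and this is a genuine gap. You write $R[\tilde g]-R[\bar g]=L_{\bar g}(\tilde h)+\mathcal{Q}$ and then seek a bound $\bigl|\int f\,\mathcal{Q}\bigr|\leq Cr\,\|\tilde h\|_{H^1}^2$, treating $\mathcal{Q}$ as a small remainder. But $\mathcal{Q}$ contains the full second-order term $\tfrac12 D^2R[\bar g](\tilde h,\tilde h)$, and $\int f\cdot\tfrac12 D^2R[\bar g](\tilde h,\tilde h)$ is genuinely of size $\|\tilde h\|_{H^1}^2$, not $r\,\|\tilde h\|_{H^1}^2$. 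The mechanism is the opposite of what you describe: after integration by parts this quadratic piece equals $-I_\Omega$ plus second-order boundary terms, where
\[
I_\Omega=\tfrac14\int_\Omega\bigl(|\nabla \tilde h|^2+|d(\textup{Tr}\,\tilde h)|^2-2\mathscr R(\tilde h,\tilde h)\bigr)f\,d\textup{vol}[\bar g],
\]
and it is the \emph{positivity} $I_{B_r}\geq c\int(|\nabla\tilde h|^2+|\tilde h|^2)$ on small balls that forces rigidity; only the cubic and higher remainder is absorbed as $O(\|\tilde h\|_{C^2})\,\|\tilde h\|_{H^1}^2$. Proving this positivity requires an eigenvalue estimate for the Laplacian on symmetric $2$-tensors under the \emph{tangential} boundary condition $\tilde h|_{T\partial B_r}=0$ --- and here is a second gap: after the divergence-free gauge, $\tilde h$ does \emph{not} vanish on $\partial B_r$; only its tangential components do. The components $\tilde h_{in},\tilde h_{nn}$ survive, so the Poincar\'e inequality you invoke is unavailable as stated, and one must instead prove $\mu(B_r)\gtrsim r^{-2}$ for this weaker boundary problem.

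There is a third issue with the boundary analysis. The claim that $\mathcal{B}[f,\tilde h]$ reduces to $2\int f\,(H[\tilde g]-H[\bar g])$ up to higher order is correct only at first order; the second variation contributes its own boundary integral, and the combined boundary expression $B_\Omega$ involves the second fundamental form $A$ of $\partial B_r$ acting on the surviving components $\tilde h_{in}$. It is precisely the blow-up $A\sim r^{-1}\bar g$ on small geodesic spheres that makes $B_\Omega$ nonnegative up to cubic errors; without this you cannot close. Finally, the conformal normalization in Step~1 is an unnecessary detour that actively hurts you: the divergence-free gauge was fixed for $g$, not for $\tilde g=u^{4/(n-2)}g$, so $\delta\tilde h\neq 0$ in general and the clean integration-by-parts identities in Step~2 acquire extra terms. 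The paper works directly with $g$, uses $R[g]\geq R[\bar g]$ only as a sign on $\int (R[g]-R[\bar g])f$, and avoids all of this.
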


This confirms the dichotomy of local surjectivity and local rigidity about scalar curvature in general in the light of the local surjectivity work of Corvino in \cite{Corvino}. 
In fact Theorem \ref{rigidity_static_space} is stronger than a local rigidity, because it allows the metric $g$ differ from $\bar g$ up to the boundary as long as the mean 
curvature is not less pointwisely on the boundary. Therefore Theorem \ref{rigidity_static_space} is a local rigidity of bounded domains in vacuum static spaces generalizing
the scalar curvature rigidity of bounded domains in particular vacuum static spaces established in \cite{Miao, S-T,B-M}. 
\\

Space forms are the special vacuum static spaces. In Section \ref{space-form} we will discuss the local scalar curvature rigidity of each space form. 
In the Euclidean cases, we are able to obtain the local scalar curvature rigidity, which may be considered as a local version of the rigidity results of  
\cite{Miao,S-T}.  In the hyperbolic cases, it seems that our local scalar curvature rigidity is new and addresses the rigidity problem of the 
positive mass theorem for metrics with corners that is raised in \cite{B-Q}. 
\begin{theorem}\label{Hyperbolic_rigidity}
For $n\geq 3$, let $B_r^{\mathbb{H}}$ be the geodesic ball with radius $r > 0$ in hyperbolic space $\mathbb{H}^n$. 
There exists an $\varepsilon_0 > 0$, such that,  for any metric $g$ on $B_r^{\mathbb{H}}$ satisfying
\begin{itemize}
  \item $g = g_{\mathbb{H}^n}$ on $\partial B_r^{\mathbb{H}}$;
  \item$R[g] \geq -n(n-1)$ in $B_r^{\mathbb{H}}$; 
  \item $H[g] \geq H[g_{\mathbb{H}^n}]$ on $\partial B_r^{\mathbb{H}}$;
  \item $|| g - g_{\mathbb{H}^n}||_{C^2 (\overline{B_r^{\mathbb{H}}})} < \varepsilon_0$, 
\end{itemize}
there is a diffeomorphism $\varphi: {B_r^{\mathbb{H}}} \to {B_r^{\mathbb{H}}}$ such that $\varphi^*g = g_{\mathbb{H}^n}$ in $B_r^{\mathbb{H}}$
and $\varphi = id$ on $\partial B_r^{\mathbb{H}}$.
\end{theorem}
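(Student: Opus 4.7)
The strategy is to adapt the proof of Theorem \ref{rigidity_static_space}, exploiting the special feature of hyperbolic space that it admits a globally positive static potential. Concretely, for any fixed $p \in \mathbb{H}^n$ the function $f(q) = \cosh(d_{\mathbb{H}^n}(p,q))$ solves the vacuum static equation on $\mathbb{H}^n$, and $f \geq 1$ everywhere. Equivalently, in the hyperboloid model $\mathbb{H}^n \subset \mathbb{R}^{n,1}$, one may take $f = x_0$. Thus $(\mathbb{H}^n, g_{\mathbb{H}^n}, f)$ is a vacuum static triple whose lapse is uniformly bounded below by a positive constant on \emph{every} bounded domain, so the pointwise restriction $f(p) \neq 0$ used in Theorem \ref{rigidity_static_space} is replaced by a uniform positivity on $\overline{B_r^{\mathbb{H}}}$ for any radius $r > 0$.

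The plan is then to revisit the proof of Theorem \ref{rigidity_static_space} and isolate where the radius restriction $r < r_0$ is actually needed. I expect it to enter at two places: first, to guarantee that $f$ remains strictly positive on $\overline{B_r(p)}$; second, possibly through a spectral/invertibility hypothesis on the Dirichlet linearization of the scalar curvature on the ball. The first point is automatic here, so the task reduces to verifying the second: namely, that the only solution $v$ on $\overline{B_r^{\mathbb{H}}}$ of the adjoint (static) equation
\[
\nabla^2 v - (\Delta v)\, g_{\mathbb{H}^n} - v \,\mathrm{Ric}[g_{\mathbb{H}^n}] = 0, \qquad v|_{\partial B_r^{\mathbb{H}}} = 0,
\]
is $v \equiv 0$. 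This follows from the classification of static potentials on $\mathbb{H}^n$ (restrictions of linear forms on $\mathbb{R}^{n,1}$ to the hyperboloid): such functions are real-analytic and cannot vanish identically on any geodesic sphere, giving triviality of the kernel by unique continuation.

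Once these two points are in hand, the proof proceeds exactly as in Theorem \ref{rigidity_static_space}. First, use a boundary-fixing diffeomorphism on $\overline{B_r^{\mathbb{H}}}$ to reduce $g$ to a geometric gauge (e.g. divergence-free with respect to $g_{\mathbb{H}^n}$). Next, pair the difference $R[g] - R[g_{\mathbb{H}^n}]$ with the positive static potential $f$ and integrate by parts to obtain a Reilly/Pohozaev-type identity in which the bulk term $\int (R[g]-R[g_{\mathbb{H}^n}]) f\, d\mathrm{vol}$ and the boundary term $\int (H[g]-H[g_{\mathbb{H}^n}]) f\, d\sigma$ dominate a nonnegative quadratic remainder. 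The hypotheses $R[g] \geq -n(n-1)$ and $H[g] \geq H[g_{\mathbb{H}^n}]$, combined with $f \geq 1 > 0$, force the remainder to vanish, which yields $g = g_{\mathbb{H}^n}$ up to a diffeomorphism equal to the identity on $\partial B_r^{\mathbb{H}}$.

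I expect the main obstacle to be the bookkeeping required so that the nonlinear error terms generated in the integral identity are controlled uniformly on balls of arbitrary radius (only by the fixed smallness $\varepsilon_0$), rather than being absorbed via smallness of $r$ as in Theorem \ref{rigidity_static_space}. This is exactly where the global lower bound $f \geq 1$ replaces the smallness of $r_0$: it provides the uniform coercivity needed to close the estimate on all of $\overline{B_r^{\mathbb{H}}}$.
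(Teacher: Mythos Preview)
Your diagnosis of where the restriction $r<r_0$ enters in Theorem \ref{rigidity_static_space} is off, and this leads to a genuine gap. The smallness of $r$ is \emph{not} used to secure $f>0$ or kernel triviality of $\gamma_{\bar g}^*$; it is used in Proposition \ref{interior-est} to make the tensor eigenvalue $\mu(B_r)\geq c_0/r^2$ large enough to absorb the curvature term $\mathscr{R}(h,h)$ in $I_\Omega$, and in Proposition \ref{boundary-est} to make $A\sim r^{-1}\bar g$ and $H\sim (n-1)/r$ dominate the lower-order boundary terms. Neither mechanism survives for large hyperbolic balls: $\mu(B_r^{\mathbb{H}})$ stays bounded as $r\to\infty$, while in hyperbolic space the interior integrand specializes to
\[
I_{B_r^{\mathbb{H}}}=\frac14\int_{B_r^{\mathbb{H}}}\bigl(|\nabla h|^2+|d(\textup{Tr}h)|^2-2|h|^2-2|\textup{Tr}h|^2\bigr)\,t\,d\textup{vol},
\]
which is sign-indefinite. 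Multiplying by $f=t\geq 1$ does nothing to fix this, so your claim that ``$f\geq 1$ provides the uniform coercivity'' is not correct. Likewise, triviality of the Dirichlet kernel of $\gamma_{\bar g}^*$ (your unique continuation argument) concerns a different operator and does not yield positivity of the second variation on divergence-free tensors.

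What the paper actually does is observe that in hyperbolic space the boundary term $B_{B_r^{\mathbb{H}}}$ contains a strictly positive piece $\int_{\partial B_r^{\mathbb{H}}}(\tfrac34 h_{nn}^2+\tfrac14|h|^2)\,\partial_\nu t\,d\sigma$ (since $\partial_\nu t=\sinh r>0$), and then \emph{trades} part of this boundary positivity for interior positivity via the divergence theorem, using the specific identities $\Delta t=nt$ and $|\nabla t|<t$. With carefully chosen constants (the paper takes $b=\tfrac34$, $a=\tfrac{11}6$) this produces an interior coefficient $\tfrac{7}{16}(n-3+\tfrac1{42})>0$ in front of $|h|^2+|\textup{Tr}h|^2$, which is exactly why the argument requires $n\geq 3$. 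This integration-by-parts trick (in the spirit of \cite{C-M-T}) is the missing idea in your outline; without it the interior term cannot be shown to be nonnegative on balls of arbitrary radius.
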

In the spherical cases, the local scalar curvature rigidity is established in \cite{B-M,C-M-T}. 
It remains intriguing to find out whether one can identity the size of the spherical cap on which the local scalar curvature 
rigidity first fails to be valid. More interestingly it remains open that whether there is global scalar curvature rigidity for bounded domains in the hyperbolic cases
and bounded domains of appropriate size in the spherical cases. 
\\

It has been noticed that it is an interesting intermediate step to consider scalar curvature rigidity among conformal deformations. Hang and Wang in \cite{H-W1},
for instance, obtained the global scalar curvature rigidity among conformal metrics for the round hemisphere (a weaker version of Min-Oo's conjecture). 
They in fact also showed that the local scalar curvature rigidity even among conformal metrics is no longer true for the round metric on any spherical cap 
bigger than the hemisphere. We observe that the rigidity for conformal deformations in \cite{H-W1}
can be extended for general vacuum static spaces with positive scalar curvature.

\begin{theorem}\label{conformal rigidity_static_space}
Let $(M^n,\bar{g}, f)$ be a complete $n$-dimensional vacuum static space with $R[\bar{g}] > 0$ ($n \geq 2$). Assume the level set
$\Omega^+ = \{ x \in M^n : f(x) > 0 \}$ is a pre-compact subset in $M$. Then 
if a metric $g \in [\bar{g}]$ on $M$ satisfies that
\begin{itemize}
\item $R[g] \geq R[\bar{g}]$ in $\Omega^+$,
\item $g$ and $\bar{g}$ induced the same metric on $\partial \Omega^+$, and
\item $H[g] = H[\bar{g}]$ on $\partial \Omega^+$,
\end{itemize}
then $g=\bar{g}$. On the other hand, for any open domain $\Omega$ in $M^n$ that contains $\overline{\Omega^+}$, 
there is a smooth metric $g \in [\bar{g}]$ such that 
\begin{itemize}
\item $R[g]  \geq R[\bar{g}]$ in $\Omega$, $R[g] > R[\bar g]$ at some point in $\Omega$, and
\item $\textup{supp}(g - \bar g)\subset \Omega$.
\end{itemize}
\end{theorem}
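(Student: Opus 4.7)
The plan is to handle the rigidity assertion and the maximality assertion separately.

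For the rigidity, I would write $g = u^{4/(n-2)}\bar g$ for a smooth $u>0$; the hypotheses on $\partial\Omega^+$ give $u\equiv 1$ and $\partial_\nu u\equiv 0$ there via the conformal transformation rules for the induced metric and the mean curvature. Since $\bar g$ is vacuum static, $R[\bar g]=R_0$ is a positive constant and the lapse satisfies the static identity $\Delta_{\bar g}f = -\tfrac{R_0}{n-1}f$. I would then rewrite $R[g]\geq R_0$ as $-c_n\Delta_{\bar g}u+R_0 u \geq R_0 u^{(n+2)/(n-2)}$ with $c_n=4(n-1)/(n-2)$, multiply by $f\geq 0$, integrate over $\Omega^+$, and integrate by parts; the Cauchy data on $\partial\Omega^+$ eliminates all boundary terms and the static equation converts $\int f\Delta u$ into a multiple of $\int fu$. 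The clean outcome is
\[
R_0\int_{\Omega^+} f\,\Phi(u)\,dV_{\bar g}\geq 0,
\qquad
\Phi(t):=\tfrac{n+2}{n-2}t-\tfrac{4}{n-2}-t^{(n+2)/(n-2)},
\]
and a quick check of $\Phi(1)=\Phi'(1)=0$ and $\Phi''(t)<0$ shows $\Phi\leq 0$ with equality only at $t=1$. Together with $f\geq 0$ this forces $\Phi(u)\equiv 0$, hence $u\equiv 1$ and $g=\bar g$.

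For the maximality direction I would adapt the construction of \cite{H-W1}. Because $\Omega\supsetneq\overline{\Omega^+}$, I have room to place a deformation $\phi := u-1 \in C^\infty_c(\Omega)$ whose support straddles $\partial\Omega^+$, where $f$ changes sign. The principal obstacle is that the linearization of $u\mapsto R[g]-R_0$ at $u=1$ is $L := -c_n\Delta-\tfrac{4R_0}{n-2}$, and $f\in\ker L$ by the static equation, so $\int_M f\,L\phi = 0$ for every compactly supported $\phi$; a purely linear construction would require $L\phi\geq 0$ pointwise, which by the same integration-plus-unique-continuation argument as above forces $\phi\equiv 0$ whenever $\mathrm{supp}(\phi)\subset\{f<0\}$. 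The strategy is therefore nonlinear: I would choose $\phi$ of a specific form---for instance a smooth cutoff of $f$ near $\partial\Omega^+$ modified by prescribed higher-order corrections---supported inside $\Omega$ and crossing $\{f=0\}$, so that the positive and negative parts of $\int_M f\,(R[g]-R_0)\,u^{(n+2)/(n-2)}\,dV_{\bar g}$ balance via the same identity used in the rigidity direction, while the nonlinear inequality $R[g]\geq R_0$ holds pointwise thanks to the strictly positive convexity correction $R_0[(1+\phi)^{(n+2)/(n-2)}-1-\tfrac{n+2}{n-2}\phi]$, with strict positivity of $R[g]-R_0$ attained at a point in $\Omega$.

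The hard step is this nonlinear construction. In the spherical model \cite{H-W1} carries it out using the specific isometry group of the round sphere; for a general vacuum static space I would need to redo the verification intrinsically in terms of $f$ and the static equation, using the quadratic correction above to offset the indefiniteness of $L\phi$ across $\partial\Omega^+$. Once the explicit $\phi$ satisfying these pointwise inequalities is in hand, the theorem follows by setting $g=(1+\phi)^{4/(n-2)}\bar g$ and noting that $\mathrm{supp}(g-\bar g)\subset\mathrm{supp}(\phi)\subset\Omega$.
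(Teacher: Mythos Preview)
Your rigidity argument is correct and is a genuinely different route from the paper's. The paper works pointwise: it sets $v=u-1$, forms the quotient $\varphi=v/f$, and applies the strong maximum principle and the Hopf lemma to the elliptic inequality $-\Delta v-\Lambda(x)v\ge 0$ together with $-\Delta f-\Lambda f=0$. Your approach is an integral identity: multiply $-c_n\Delta u+R_0 u\ge R_0 u^{(n+2)/(n-2)}$ by $f$, integrate, and use Green's formula with $f=0$ and $u-1=0$ on $\partial\Omega^+$ to land on $\int_{\Omega^+}f\,\Phi(u)\ge 0$ with $\Phi\le 0$. Two remarks are worth recording. First, your computation never actually uses $\partial_\nu u=0$ (both boundary terms vanish from $f=0$ and $u=1$), so your method in fact proves a slightly stronger statement than the one asserted, dropping the mean-curvature hypothesis; the paper's maximum-principle proof, by contrast, does invoke $\partial_\nu v=0$ in the Hopf step. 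Second, you have written only the case $n\ge 3$; the paper also treats $n=2$ via $g=e^{2u}\bar g$, and your integral method adapts there with $\Phi(t)=2t+1-e^{2t}$.

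The maximality half, however, is not yet a proof. You correctly identify the obstruction (the linearization $L$ annihilates $f$), but your proposed remedy---``a smooth cutoff of $f$ near $\partial\Omega^+$ modified by prescribed higher-order corrections'' balanced by the convexity term---is left unspecified, and you acknowledge the hard step is missing. The paper's construction is concrete and avoids any delicate balancing: enlarge to $\Omega^{-\epsilon}=\{f>-\epsilon\}\subset\Omega$ and take $u_t=1-t(f+\epsilon)$ there. Since $\Delta(f+\epsilon)=-\tfrac{R_0}{n-1}f>-\tfrac{R_0}{n-1}(f+\epsilon)$ on $\Omega^{-\epsilon}$, the linearization is \emph{strictly} positive, giving $R[g_t]=R_0+\tfrac{4(n-2)}{(n-1)^2}\epsilon R_0\,t+O(t^2)>R_0$ for small $t$, together with $H[g_t]>H[\bar g]$ on $\partial\Omega^{-\epsilon}$. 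One then glues this to $\bar g$ across a collar using the Brendle--Marques--Neves smoothing (with the explicit conformal factor $w=1-e^{-1/(f+\epsilon)}$ on $\{-\epsilon<f<0\}$), which preserves the conformal class and the scalar-curvature inequality. The key idea you are missing is not a nonlinear correction but the simple shift $f\mapsto f+\epsilon$, which breaks the kernel constraint at first order.
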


Our proof of the rigidity in Theorem \ref{conformal rigidity_static_space} only uses the maximum principles, 
which seems to be more straightforward than the proof used in \cite{H-W1} in the case of the round hemisphere\footnote{After we posted the previous version of this
paper on arXiv, we were informed that the rigidity part of Theorem \ref{conformal rigidity_static_space} had been known in \cite{B-M-V}.}. The construction in the second part of
Theorem \ref{conformal rigidity_static_space} is based on the idea in \cite{B-M-N}. For a detailed history of the study of the scalar curvature rigidity phenomena 
and the solution of Min-Oo's conjecture, readers are referred to the excellent survey article \cite{Brendle} of Brendle.
\\

\paragraph{\textbf{Acknowledgement}}

We would like to thank Professors Justin Corvino, Pengzi Miao and Carla Cerderbaum for their interests and stimulating discussions. 
\\


\section{Definition of vacuum static spaces}

Static spaces are time slices in static space-times, which are the important global solutions to Einstein equations 
in general relativity. A space-time metric
$$
\hat g = - f^2 dt^2 + \bar g
$$ 
on $\mathbb{R}\times M^n$ is said to be vacuum static if the so-called lapse 
function $f$ and the Riemannian metric $\bar g$ are independent of the time $t$ and the space-time metric $\hat g$
satisfies the vacuum Einstein equation
\begin{equation}\label{equ:einstein}
\textup{Ric}[\hat g] - \frac{1}{2}R[\hat g] \hat{g} + \Lambda \hat g = 0.
\end{equation} 
On the time slice $(M^n, \ \bar g)$ of a vacuum static space-time the Einstein equation \eqref{equ:einstein} turns into the vacuum static equation:
\begin{equation}\label{static_2}
\nabla^2 f -  \left(\textup{Ric}[\bar g] - \frac {R[\bar g]}{n-1} \bar g \right)f = 0\quad \text{on $M^n$}.
\end{equation}

\begin{definition} (\cite{H-E,K-O}) We say $(M^n, \ \bar g, \ f)$ is a \emph{vacuum static space}, if $(M^n, \bar g)$ is a complete Riemannian manifold and the 
smooth function $f \not\equiv 0$ satifies the equation \eqref{static_2}. The function is called the lapse function for the vacuum static space $(M^n, \ \bar g)$. 
\end{definition}

In \cite{F-M}, Fischer and Marsden considered the scalar curvature
$$R[g] : \mathfrak M \rightarrow C^{\infty} (M)
$$
as a map from the space $\mathfrak M$ of Riemannian metrics and calculated its derivative
$$
\gamma_{g} = D R[g] : S_2(M) \rightarrow C^{\infty}(M)
$$
and the $L^2$-formal adjoint  
$$
\gamma^*_{g}f =   \nabla^2 f - g\Delta f - f\cdot \textup{Ric}[g] : C^{\infty} (M) \rightarrow S_2(M).
$$
It is very intriguing that a complete Riemannian manifold $(M^n, \ \bar g, \ f)$ is vacuum static if and only if $f$ is in the kernel of the operator $\gamma_{\bar g}^*$.
\\

Space forms are examples of vacuum static spaces. Locally conformally flat vacuum static spaces have been completely classified in \cite{Kobayashi, Lafontaine} in 1980s. 
Recently, Bach flat vacuum static spaces have also been classified in \cite{Q-Y} based on the idea used in \cite{C-C1, C-C2}. It is clear that 
there are many non-compact vacuum static spaces which are not necessarily locally conformally flat (cf.  \cite{Q-Y}).
\\


\section{Local scalar curvature rigidity in general}

In this section we will investigate the local scalar curvature rigidity phenomenon for general vacuum static spaces.  For convenience of readers, we will
present the calculations in \cite{F-M} and \cite{B-M-N, B-M} (cf. see also \cite{C-M-T, M-T}) for general vacuum static spaces. We first recall the deformations of scalar curvature.
In this paper we use the conventions that Greek indices run through $1,2, \cdots, n$ while Latin indices run through $1,2,\cdots,n-1$.

\begin{lemma}\label{scalar_variation_formulae} (\cite[Lemma 2.2 and Lemma 7.2]{F-M}) For the deformation of metrics $g_t = g + t h$, we have
\begin{align}\label{1-scalar-cur-deform}
\left.\frac{d}{dt}\right|_{t=0}R[g_t] = DR[g] (h) = - \Delta (\textup{Tr} h) + \delta\delta h - \textup{Ric}\cdot h
\end{align}
and
\begin{align}
\left.\frac{d^2}{dt^2}\right|_{t=0} R[g_t] = & D^2 R[g](h,h) = - 2DR[g] (h^2) - \Delta (|h|^2) - \frac{1}{2} |\nabla h|^2 - \frac{1}{2}|d(\textup{Tr} h)|^2\\ \notag & + 2 h\cdot
\nabla^2(\textup{Tr} h)  - 2 \delta h\cdot d(\textup{Tr} h) + \nabla_\alpha h_{\beta\gamma} \, \nabla^{\beta} h^{\alpha\gamma},
\end{align}
where $(\delta h )_\alpha = - \nabla^\beta h_{\alpha\beta}$.
\end{lemma}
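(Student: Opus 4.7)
The plan is to differentiate the identity $R[g_t] = g_t^{\alpha\beta}R_{\alpha\beta}[g_t]$ once and twice at $t=0$. The two standard ingredients I will use are, first, the variation of the Christoffel symbols obtained by differentiating the Koszul formula,
$$\dot{\Gamma}^\gamma_{\alpha\beta} = \tfrac{1}{2}g^{\gamma\delta}\bigl(\nabla_\alpha h_{\beta\delta} + \nabla_\beta h_{\alpha\delta} - \nabla_\delta h_{\alpha\beta}\bigr),$$
together with the Palatini-type identity $\dot{R}_{\alpha\beta} = \nabla_\gamma\dot{\Gamma}^\gamma_{\alpha\beta} - \nabla_\alpha\dot{\Gamma}^\gamma_{\gamma\beta}$; and second, the elementary identities $\dot{g}^{\alpha\beta}|_{t=0} = -h^{\alpha\beta}$ and $\ddot{g}^{\alpha\beta}|_{t=0} = 2(h^2)^{\alpha\beta}$, obtained from the product rule applied to $g^{\alpha\gamma}(t)g_{\gamma\beta}(t)=\delta^\alpha_\beta$.

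For the first formula, contracting $\dot{R}_{\alpha\beta}$ with $g^{\alpha\beta}$ and moving the metric through the covariant derivatives yields $g^{\alpha\beta}\dot{R}_{\alpha\beta} = \delta\delta h - \Delta(\textup{Tr}\, h)$; adding the contribution $\dot{g}^{\alpha\beta}R_{\alpha\beta} = -\textup{Ric}\cdot h$ from the outer factor gives \eqref{1-scalar-cur-deform}. For the second formula, I apply $d/dt$ a second time, obtaining
$$\ddot{R} = \ddot{g}^{\alpha\beta}R_{\alpha\beta} + 2\dot{g}^{\alpha\beta}\dot{R}_{\alpha\beta} + g^{\alpha\beta}\ddot{R}_{\alpha\beta}.$$
The critical reorganization is to notice that the term $2(h^2)^{\alpha\beta}R_{\alpha\beta}$, together with the pieces of $g^{\alpha\beta}\ddot{R}_{\alpha\beta}$ that reproduce the Christoffel/trace-divergence structure of the first variation applied to the symmetric tensor $h^2$, assemble into exactly $-2DR[g](h^2)$; this isolates the explicit first summand in the stated formula. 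What remains is generated by the genuinely quadratic part of $\ddot{\Gamma}^\gamma_{\alpha\beta}$ and by the cross term $2\dot{g}^{\alpha\beta}\dot{R}_{\alpha\beta}$, and consists entirely of quadratic expressions in $\nabla h$ and $d(\textup{Tr}\, h)$. Grouping them by derivative type and performing one or two integrations by parts (turning one second derivative of $|h|^2$ into $-\Delta|h|^2$, matching the squared-gradient terms, and recognizing the ``twisted'' contraction $\nabla_\alpha h_{\beta\gamma}\nabla^\beta h^{\alpha\gamma}$) should yield the six displayed terms.

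The main obstacle is pure bookkeeping rather than any geometric subtlety. The second derivative of a double contraction produces a proliferation of terms mixing $h$, $\nabla h$, and $h^2$, and commuting covariant derivatives introduces Riemann curvature terms that must be tracked and re-absorbed into the $-2DR[g](h^2)$ summand via the first-variation formula. The strategy to avoid drowning is to compute $\ddot{\Gamma}^\gamma_{\alpha\beta}$ only once and from the outset separate its contributions into a ``Ricci-like'' block, destined to feed into $-2DR[g](h^2)$, and a ``pure quadratic gradient'' block, destined to produce the remaining six terms; this keeps the cancellations visible and matches the form stated by Fischer and Marsden.
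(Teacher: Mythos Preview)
The paper does not give its own proof of this lemma; it is simply quoted from Fischer--Marsden \cite[Lemma 2.2 and Lemma 7.2]{F-M}. Your outline is the standard derivation and is the same one carried out in \cite{F-M}, so in that sense the approaches coincide.

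One point to tighten: the second-variation formula you are asked to prove is a \emph{pointwise} identity, not an integrated one, so ``performing one or two integrations by parts'' is not the right description of what happens. What you actually do is apply the Leibniz rule to rewrite expressions such as $\nabla^\alpha\bigl(h^{\beta\gamma}\nabla_\alpha h_{\beta\gamma}\bigr)$ as $\tfrac{1}{2}\Delta(|h|^2) - $ (gradient-squared term), and similarly for the trace and divergence pieces; no boundary terms arise because nothing is being integrated. Likewise, the Riemann curvature terms produced by commuting covariant derivatives collapse to Ricci contractions against $h^2$, and these are precisely what feed the $-\textup{Ric}\cdot h^2$ part of $-2DR[g](h^2)$. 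With that terminology fixed, your plan---compute $\ddot{\Gamma}$, separate the contributions that rebuild $DR[g](h^2)$ from the purely quadratic first-derivative remainder, and match the six residual terms---is correct and is exactly the bookkeeping Fischer and Marsden perform.
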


Note that the operator $\Delta$ differs from that in \cite{F-M} by a sign. 
Let $(M^n, \ \bar{g},\ f)$ ($n \geq 3$) be a vacuum static space and $\Omega$ ia subdomain in $M^n$. As in \cite{F-M,B-M-N,B-M, M-T}, we consider
the functional
\begin{align}\label{the functional}
\mathscr{F}[g] = \int_{\Omega} fR[g]d\textup{vol} [\bar{g}],
\end{align}
where $d\textup{vol}[\bar{g}]$ is the volume element with respect to the static metric $\bar{g}$ instead of $g$. 
\\

It is well known that such geometric problems need to appropriately fix a gauge in order to derive rigidity results. In \cite{F-M} they relied on the slice theorem in \cite{Ebin} for closed
manifolds. The following lemma from \cite{B-M} is a version of the slice theorem that is applicable to domains instead of closed manifolds without boundary . 

\begin{lemma} \label{slice} (\cite[Proposition 11]{B-M}) Suppose that $\Omega$ is a domain in a Riemannian manifold $(M^n, \ \bar g)$. 
Fix a real number $p > n$, there exists an $\varepsilon > 0$, such that for a metric $g$ on $\Omega$ with $$||g - \bar{g}||_{W^{2,p}({\Omega}, \bar{g})} < \varepsilon,$$ there exists a diffeomorphism $\varphi: {\Omega} \rightarrow {\Omega}$ such that $\varphi|_{_{\partial\Omega}} = id$ and $h = \varphi^*(g) - \bar{g}$ is divergence-free in $\Omega$ with 
respect to $\bar{g}$. Moreover, $$||h||_{W^{2,p}({\Omega}, \bar{g})} \leq C ||g - \bar{g}||_{W^{2,p}({\Omega}, \bar{g})},$$ for some constant $C > 0$ that only depends on ${\Omega}$.
\end{lemma}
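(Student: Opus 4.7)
The plan is to apply the implicit function theorem in Banach spaces, realizing the diffeomorphism $\varphi$ as the time-one flow of a vector field $X$ that vanishes on $\partial\Omega$. Concretely, I would consider the map
\begin{equation*}
F(X,g) := \delta_{\bar g}\bigl(\varphi_X^{\ast}g - \bar g\bigr)
\end{equation*}
defined on a neighborhood of $(0,\bar g)$ in
\begin{equation*}
\bigl\{X \in W^{3,p}(\Omega;TM) : X|_{\partial\Omega} = 0\bigr\} \;\times\; W^{2,p}(\Omega; S_2 M),
\end{equation*}
with values in $W^{1,p}(\Omega;T^{\ast}M)$. The hypothesis $p>n$ is precisely what makes $W^{2,p}\hookrightarrow C^{0}$ and $W^{3,p}\hookrightarrow C^{1}$, so that $\varphi_X$ is a well-defined diffeomorphism near the identity and $F$ is a smooth map between Banach spaces with $F(0,\bar g)=0$.

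Next, since $\tfrac{d}{dt}\big|_{t=0}\varphi_{tX}^{\ast}\bar g = \mathcal L_X\bar g$, the partial derivative of $F$ in $X$ at $(0,\bar g)$ is
\begin{equation*}
L(X) = \delta_{\bar g}(\mathcal L_X \bar g),
\end{equation*}
which a direct computation using the Ricci identity writes as $L(X)_\beta = -\Delta X_\beta - \nabla_\beta(\operatorname{div} X) - R_{\beta\gamma}X^{\gamma}$. Its principal symbol $|\xi|^{2}\mathrm{id} + \xi\otimes\xi$ is positive definite, so $L$ with Dirichlet data is an elliptic boundary value problem. The crucial step is showing that $L$ is an isomorphism from $\{X\in W^{3,p}:X|_{\partial\Omega}=0\}$ onto $W^{1,p}$; by Fredholm theory it suffices to prove injectivity. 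If $L(X)=0$ and $X|_{\partial\Omega}=0$, integration by parts gives
\begin{equation*}
0 = \int_\Omega \langle L(X),X\rangle\, d\mathrm{vol} = \tfrac{1}{2}\int_\Omega |\mathcal L_X\bar g|^{2}\, d\mathrm{vol},
\end{equation*}
so $X$ is a Killing field vanishing on $\partial\Omega$. Since tangential derivatives of $X$ vanish on $\partial\Omega$ and $\nabla X$ is antisymmetric by the Killing equation, one concludes that $\nabla X$ also vanishes on $\partial\Omega$; the standard fact that a Killing field is determined by its $1$-jet at a single point then forces $X\equiv 0$.

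With $L$ invertible, the implicit function theorem yields, for every $g$ sufficiently close to $\bar g$ in $W^{2,p}$, a unique $X$ near $0$ in $W^{3,p}\cap\{X|_{\partial\Omega}=0\}$ with $F(X,g)=0$, depending continuously on $g$. Setting $\varphi:=\varphi_X$ produces a diffeomorphism with $\varphi|_{\partial\Omega}=\mathrm{id}$ such that $h=\varphi^{\ast}g-\bar g$ is divergence-free with respect to $\bar g$; the quantitative bound $\|X\|_{W^{3,p}}\leq C\|g-\bar g\|_{W^{2,p}}$ combined with smoothness of the pullback operation on $W^{2,p}$ gives the advertised estimate for $\|h\|_{W^{2,p}}$. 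The main obstacle is the isomorphism property of $L$ with Dirichlet data, and inside it the passage from "$X$ Killing and zero on $\partial\Omega$" to "$X\equiv 0$"; once this is in hand the remainder is routine IFT bookkeeping calibrated to the function spaces chosen so that $p>n$.
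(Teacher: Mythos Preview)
The paper does not prove this lemma itself; it simply quotes \cite[Proposition~11]{B-M}. Your sketch is the standard argument used there: parametrise diffeomorphisms fixing $\partial\Omega$ by vector fields $X$ with $X|_{\partial\Omega}=0$, apply the implicit function theorem to $F(X,g)=\delta_{\bar g}(\varphi_X^{\ast}g-\bar g)$, and verify that the linearisation $L(X)=\delta_{\bar g}(\mathcal L_X\bar g)$ is an isomorphism from $\{X\in W^{3,p}:X|_{\partial\Omega}=0\}$ onto $W^{1,p}$ by combining strong ellipticity of the symbol $|\xi|^2\mathrm{id}+\xi\otimes\xi$, the classical fact that the Dirichlet problem for a strongly elliptic second-order system has Fredholm index zero, and the injectivity step (\,$L(X)=0$ forces $X$ Killing with $X|_{\partial\Omega}=0$, hence $\nabla X|_{\partial\Omega}=0$, hence $X\equiv 0$\,). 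This is correct and matches the approach of \cite{B-M}; the only places where a reader might want a word more are (i) that the Dirichlet problem for a \emph{strongly} elliptic system automatically satisfies the complementing condition, which is what justifies ``index zero'' without a separate computation, and (ii) the routine but not entirely trivial verification that $(X,g)\mapsto\varphi_X^{\ast}g$ is a smooth map between the indicated Sobolev spaces when $p>n$.
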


For the convenience of calculations we are using a Fermi coordinate of the boundary $\partial\Omega$ with respect to the vacuum static metric $\bar g$ such that
$\partial_n = \partial_\nu$ on the boundary $\partial\Omega$.
Let $A$ be the second fundamental form and $H$ be mean curvatures of $\partial\Omega$. In the following calculations from now on in this section everything is 
with respect to the vacuum static metric $\bar g$ unless it will be indicated otherwise. But, first, in the light of Lemma \ref{slice}, 
we may assume that 
\begin{equation}\label{t-boundary-condition}
\delta h = 0 \text{ in $\Omega$ and } h_{ij} = 0 \text{ on $\partial\Omega$}.
\end{equation}
We would like to mention that it is not necessarily true that $h$ vanishes on the boundary after requiring $\delta h = 0$ in $\Omega$. For the convenience of readers, we 
present the calculations:
\begin{equation}\label{covariant-derivative-at-bdy}
\aligned
h_{ij, k} & = A_{jk} h_{in} + A_{ik}h_{jn}\\
h_{in,}^{\quad i} & = (\nabla^{\partial\Omega})^i h_{in} + H h_{nn}\\
h_{nn,i} & = \partial_i h_{nn} - 2 A_{ij}h_n^{\ j}\\
h_{j \  , i} ^{\ j} & = 2 A_{ij}h_{n}^{\ j}\\
h_{\alpha n,}^{\quad n} &= - h_{\alpha i,}^{\quad i}.
\endaligned 
\end{equation}

\begin{lemma} \label{variations} Suppose that $(M^n, \ \bar g, \ f)$ is a vacuum static space and that $g_t = \bar g + t h$ is a deformation. Then
\begin{align}\label{1-variation}
\left.\frac{d}{dt}\right|_{t=0} \mathscr{F}[g_t] (h)= \int_{\partial\Omega} \left( (\textup{Tr} h) \partial_{{\nu}}f - f \partial_{{\nu}} (\textup{Tr} h) - 
h({\nu}, \nabla f) - f\delta h\cdot {\nu} \right)d\sigma[\bar{g}]
\end{align}
and, if in addition one assumes $\delta h = 0$, 
\begin{equation}\label{2-variation} 
\aligned
\left.\frac{d^2}{dt^2}\right|_{t=0} \mathscr{F}[g_t] (h, h) & =  -\frac{1}{2} \int_{\Omega} (|\nabla h|^2 + |d(\textup{Tr} h)|^2 - 2 \mathscr{R} (h,h))f d\textup{vol}[\bar{g}] \\ 
 + \int_{\partial\Omega} (f ( \partial_{\nu}(|h|^2) + \delta(h^2)\cdot \nu & + 2h(\nabla(\textup{Tr} h), \nu))
+ h^2(\nu, \nabla f)- |h|^2 \partial_{\nu} f - 2 (\textup{Tr} h) h(\nu, \nabla f))d\sigma[\bar{g}],
\endaligned
\end{equation}
where $\mathscr{R}(h,h) = R_{\alpha\beta\gamma\delta}h^{\alpha\gamma}h^{\beta\delta} + 2 (\textup{Tr} h) \textup{Ric} \cdot h - \frac{2R}{n-1} (\textup{Tr} h)^2$.
\end{lemma}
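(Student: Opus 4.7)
First I would substitute the formulas of Lemma \ref{scalar_variation_formulae} into $\frac{d}{dt}\big|_{t=0}\mathscr{F}[g_t]$ and $\frac{d^2}{dt^2}\big|_{t=0}\mathscr{F}[g_t]$ and integrate by parts until every derivative on $h$ is transferred onto $f$. The vacuum static equation (\ref{static_2}) --- equivalently $\gamma^*_{\bar g} f \equiv \nabla^2 f - \bar g\,\Delta f - f\,\textup{Ric} = 0$, with trace $\Delta f = -\frac{fR}{n-1}$ --- is then available to cancel all interior bulk. For the first variation, plugging $DR[\bar g](h) = -\Delta(\textup{Tr}\,h) + \delta\delta h - \textup{Ric}\cdot h$ into $\int_\Omega f\,DR[\bar g](h)\,d\textup{vol}[\bar g]$ and integrating by parts (once on the Laplacian, twice on $\delta\delta h$) generates interior terms $(\Delta f)\textup{Tr}\,h$, $\nabla^2 f\cdot h$, $-f\,\textup{Ric}\cdot h$; the static equation shows these collapse identically, leaving precisely the boundary expression in (\ref{1-variation}).

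For the second variation, impose the gauge $\delta h = 0$ from Lemma \ref{slice}, which kills the cross term $-2\delta h\cdot d(\textup{Tr}\,h)$. The piece $-2DR[\bar g](h^2)$ reduces directly to a boundary integral by applying formula (\ref{1-variation}) with the substitution $h \mapsto h^2$. Each of the remaining interior integrals --- coming from $-\Delta(|h|^2)$, $2h\cdot\nabla^2\textup{Tr}\,h$, the quadratic terms $-\frac{1}{2}|\nabla h|^2$, $-\frac{1}{2}|d\textup{Tr}\,h|^2$, and the contracted term $\nabla_\alpha h_{\beta\gamma}\nabla^\beta h^{\alpha\gamma}$ --- will be handled by one or two integrations by parts, invoking the static equation whenever $\Delta f$ or $\nabla^2 f$ is produced. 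The decisive calculation is the last one: integrating $\int_\Omega f\,\nabla_\alpha h_{\beta\gamma}\nabla^\beta h^{\alpha\gamma}$ by parts in $\beta$ (its boundary contribution vanishes by $\delta h = 0$) and commuting covariant derivatives via the Ricci identity
\begin{equation*}
\nabla^\beta\nabla_\alpha h_{\beta\gamma} \;=\; \nabla_\alpha\nabla^\beta h_{\beta\gamma} + R^{\beta}{}_{\alpha\beta}{}^{\delta}\,h_{\delta\gamma} + R^{\beta}{}_{\alpha\gamma}{}^{\delta}\,h_{\beta\delta},
\end{equation*}
in which the first term on the right drops out by $\delta h = 0$. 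This is the sole source of the Riemann tensor $R_{\alpha\beta\gamma\delta}h^{\alpha\gamma}h^{\beta\delta}$ inside $\mathscr{R}(h,h)$; the accompanying $\textup{Ric}_\alpha{}^\delta h_{\delta\gamma}$ piece then pairs against $h^{\alpha\gamma}$ to produce $\textup{Ric}\cdot h^2$.

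Finally I would assemble. The interior contributions reorganize into $-\frac{1}{2}f(|\nabla h|^2 + |d\textup{Tr}\,h|^2) + f\,\mathscr{R}(h,h)$ after the static equation is used to absorb the leftover $\textup{Ric}\cdot h^2$ and $\frac{R}{n-1}(\textup{Tr}\,h)^2$ terms into the definition of $\mathscr{R}$. The boundary integrals are collected using the Fermi-coordinate identities (\ref{covariant-derivative-at-bdy}) together with $h_{ij}|_{\partial\Omega} = 0$, at which point they line up term by term with the expression in (\ref{2-variation}). I expect the main obstacle to be bookkeeping rather than conceptual novelty: a large number of terms of similar algebraic shape are generated by the integrations by parts and the Ricci identity, and tracking which cancel against $\nabla^2 f$, which recombine into $\mathscr{R}(h,h)$, and which survive as the boundary residues involving $\partial_\nu(|h|^2)$, $\delta(h^2)\cdot\nu$, and $h(\nabla\textup{Tr}\,h,\nu)$ --- after disentangling the normal and tangential components through (\ref{covariant-derivative-at-bdy}) --- requires disciplined accounting.
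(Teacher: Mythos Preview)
Your strategy is exactly the paper's: feed Lemma~\ref{scalar_variation_formulae} into $\mathscr{F}$, integrate by parts, invoke the static equation, and treat $-2DR[\bar g](h^2)$ by applying (\ref{1-variation}) with $h\mapsto h^2$. The paper also singles out the term $\int_\Omega f\,\nabla_\alpha h_{\beta\gamma}\nabla^\beta h^{\alpha\gamma}$ as the one nontrivial calculation, handled via the Ricci identity just as you propose.

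There is one concrete slip, however. When you integrate $\int_\Omega f\,\nabla_\alpha h_{\beta\gamma}\nabla^\beta h^{\alpha\gamma}$ by parts in $\beta$, the boundary term does \emph{not} vanish by $\delta h=0$. It is $\int_{\partial\Omega} f\,h^{\alpha\gamma}\nabla_\alpha h_{\nu\gamma}\,d\sigma$, and using $\delta h=0$ one has $h^{\alpha\gamma}\nabla_\alpha h_{\nu\gamma}=\nabla_\alpha(h^2)^{\alpha}{}_{\nu}=-\,(\delta(h^2))_\nu$, so the boundary contribution is $-\int_{\partial\Omega} f\,\delta(h^2)\cdot\nu$. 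This survives and is essential: it combines with the $+2f\,\delta(h^2)\cdot\nu$ coming from $-2DR[\bar g](h^2)$ to produce the coefficient $+f\,\delta(h^2)\cdot\nu$ in (\ref{2-variation}). Likewise, after the commutation you are still left with $-\int_\Omega h^{\alpha\gamma}(\nabla^\beta f)\nabla_\alpha h_{\beta\gamma}$; a \emph{second} integration by parts (now in $\alpha$) yields the boundary piece $-\int_{\partial\Omega} h^2(\nu,\nabla f)$ together with the bulk $\int_\Omega(\nabla^2 f)\cdot h^2$, which then combines with the $-f\,\textup{Ric}\cdot h^2$ from the commutator and the static equation to give $\int_\Omega |h|^2\Delta f$. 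If you drop these two boundary pieces as you suggest, the coefficients of $\delta(h^2)\cdot\nu$ and $h^2(\nu,\nabla f)$ in your final formula will both be off by a factor of two.

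A minor secondary point: the lemma as stated assumes only $\delta h=0$, not the tangential condition $h_{ij}|_{\partial\Omega}=0$, and the Fermi identities (\ref{covariant-derivative-at-bdy}) play no role here. Those enter only in the subsequent Proposition~\ref{I_Sigma}, where the abstract boundary expression in (\ref{2-variation}) is rewritten in normal/tangential components. For this lemma the boundary terms remain in invariant form.
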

\begin{proof}
One only needs to apply Lemma \ref{scalar_variation_formulae} and perform integrating by parts. One calculation is worth to present here.
\begin{align*}
&\int_{\Omega} \left( \nabla_{\alpha} h_{\beta\gamma} \cdot \nabla^{\beta} h^{\alpha\gamma}\right) f d\textup{vol}_{\bar{g}} \\
=& \int_{\Omega} \left( (\nabla^2 f- f \textup{Ric}[\bar{g}])\cdot h^2  + f R_{\alpha\beta\gamma\delta}h^{\alpha\gamma}h^{\beta\delta}\right) 
d\textup{vol}_{\bar{g}} - \int_{\partial\Omega} 
\left( h^2 (\nu, \nabla f) + f \delta (h^2)\cdot\nu \right)d\sigma_{\bar{g}}\\
=& \int_{\Omega} \left( |h|^2 \Delta f + f R_{\alpha\beta\gamma\delta}h^{\alpha\gamma}h^{\beta\delta}\right) d\textup{vol}_{\bar{g}} - \int_{\partial\Omega} 
\left( h^2(\nu, \nabla f) + f \delta (h^2)\cdot\nu \right)d\sigma[\bar{g}],
\end{align*}
where the static equation \eqref{static_2} and the Ricci identity in Riemannian geometry are used. One may also use \eqref{1-variation} to 
handle the term $\int_\Omega DR[\bar g] (h^2)d\textup{vol}[\bar g]$.
\end{proof}

The following expansion of the mean curvature from \cite{B-M} gives us the first and second deformation of the mean curvature.

\begin{lemma} (\cite[Proposition 5]{B-M}) Suppose that $g = \bar g + h$ be another metric on a domain $\Omega$ in a Riemannian manifold $(M^n, \ \bar g)$. 
Assume that $h|_{T\partial\Omega} = 0$. Then
\begin{equation}\label{expansion-mc}
\aligned
H[g] - H[\bar g] & = (\frac 12 H[\bar g]h_{nn} - h_{in,}^{\quad i} + \frac 12h_{i \ , n}^{\ i}) \\ 
+ \frac 12 ((-\frac 14h_{nn}^2 + h_{in}h^i_{\ n})H[\bar g] & + h_{nn}(h_{in,}^{\quad i} - \frac 12h_{i \ , n}^{\ i}))
+ O(|h|^2(|\nabla h| + |h|)),
\endaligned
\end{equation}
where 
$$
|O(|h|^2(|\nabla h| + |h|))|\leq C (|h|^2(|\nabla h| + |h|))
$$
for some constant $C$ that only depends on $n$.
\end{lemma}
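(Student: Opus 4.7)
The plan is a direct second-order Taylor expansion of $H[g]$ in Fermi coordinates for $\bar g$ adapted to $\partial\Omega$. Take coordinates $(x^1,\dots,x^{n-1},x^n)$ with boundary $\{x^n=0\}$ and $\partial_n=\partial_\nu$, so that $\bar g_{nn}\equiv 1$, $\bar g_{ni}\equiv 0$ in a collar, and $\bar A_{ij}=-\tfrac 12\partial_n\bar g_{ij}$ on $\partial\Omega$.

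First I would compute the $g$-unit outer normal. Writing $\nu_g=N(\partial_n+X^i\partial_i)$, the orthogonality conditions $g(\nu_g,\partial_j)=0$ and the normalization $g(\nu_g,\nu_g)=1$, together with the hypothesis $h_{ij}|_{\partial\Omega}=0$ (which forces the induced inverse metric on $\partial\Omega$ to remain $\bar g^{ij}$), give
\[
X^i=-h^i{}_n,\qquad N=1-\tfrac 12 h_{nn}+\tfrac 12 h_{ni}h^i{}_n+\tfrac 38 h_{nn}^2+O(|h|^3).
\]
A pleasant simplification is that the associated covector $\nu^g_\alpha=g_{\alpha\beta}\nu_g^\beta$ has only a normal component on $\partial\Omega$: one checks $\nu^g_n=N^{-1}$, $\nu^g_i=0$. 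Consequently $A^g_{ij}=(\Gamma^g)^\alpha_{ij}\nu^g_\alpha=N^{-1}(\Gamma^g)^n_{ij}$, and, using again that the induced inverse metric coincides with $\bar g^{ij}$,
\[
H[g]=\bar g^{ij}A^g_{ij}=N^{-1}\bar g^{ij}(\Gamma^g)^n_{ij}\big|_{\partial\Omega}.
\]

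Second, I would expand $(\Gamma^g)^n_{ij}=\tfrac 12 g^{n\alpha}(\partial_i g_{j\alpha}+\partial_j g_{i\alpha}-\partial_\alpha g_{ij})$. The crucial observation is that on $\partial\Omega$ the tangential derivatives $\partial_k h_{ij}$ vanish (since $h_{ij}\equiv 0$ along the boundary), so among the $\partial h$-pieces only $\partial_k h_{in}$, $\partial_k h_{nn}$, and $\partial_n h_{\alpha\beta}$ contribute. Using the expansions of the $n$-th row of $g^{-1}$, namely $g^{nn}=1-h_{nn}+h_{nn}^2+h_{ni}h^i{}_n+O(|h|^3)$ and $g^{nk}=-h^k{}_n+h_{nn}h^k{}_n+O(|h|^3)$, one reads off $(\Gamma^g)^n_{ij}$ to second order. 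Converting partial derivatives into $\bar g$-covariant derivatives via the identities \eqref{covariant-derivative-at-bdy}, which trade the non-vanishing ambient covariant derivative $h_{ij,k}$ for expressions in $\bar A$ and $h_{\alpha n}$ and similarly repackage $h_{nn,i}$, $h_{j\ ,i}^{\ j}$, and $h_{\alpha n,}{}^n$, reorganizes the whole expansion into an intrinsic form.

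Collecting by order in $h$ the zeroth-order part recovers $H[\bar g]$; the linear part collapses to $\tfrac 12 H[\bar g]h_{nn}-h_{in,}{}^i+\tfrac 12 h_{i\ ,n}^{\ i}$ via the first-order factor of $N^{-1}$ together with the tangential divergence $\bar g^{ij}\partial_i h_{jn}$ and the normal trace $-\tfrac 12\bar g^{ij}\partial_n h_{ij}$; and the quadratic part collapses to $\tfrac 12((-\tfrac 14 h_{nn}^2+h_{in}h^i{}_n)H[\bar g]+h_{nn}(h_{in,}{}^i-\tfrac 12 h_{i\ ,n}^{\ i}))$ via the cross-multiplication of these linear pieces with the second-order factor of $N^{-1}$ and the second-order parts of $g^{nn}$, $g^{nk}$. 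Every other quadratic combination contains at least three factors drawn from $\{h,\bar\nabla h\}$ with at least two $h$'s, hence is absorbed into the remainder $O(|h|^2(|\nabla h|+|h|))$. The principal difficulty is not conceptual but organizational: showing that the quadratic cross-terms among $h_{nn}$, $h_{in}$, $\bar A$, and $\bar\Gamma$ collapse to exactly the two stated summands requires applying \eqref{covariant-derivative-at-bdy} as early as possible, before the number of index sums generated by $g^{n\alpha}\partial_\beta h_{\gamma\delta}$ type terms becomes unwieldy.
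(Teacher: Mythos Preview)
The paper does not supply its own proof of this lemma; it is quoted verbatim from \cite[Proposition~5]{B-M} and used as a black box. Your proposal --- a direct second-order Taylor expansion of $H[g]$ in Fermi coordinates, computing the $g$-unit normal, observing that its covector has only a normal component, and then expanding $\bar g^{ij}(\Gamma^g)^n_{ij}$ --- is exactly the standard route and is essentially how Brendle and Marques prove it in the cited reference. Your intermediate formulas for $N$, $X^i$, $\nu^g_\alpha$, $g^{nn}$, and $g^{nk}$ are correct, and the bookkeeping strategy you describe (convert to $\bar g$-covariant derivatives early via the identities \eqref{covariant-derivative-at-bdy}) is the right one to keep the quadratic terms manageable.
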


The vacuum metric $\bar g$ is not a critical point for the functional $\mathscr{F}$ according to \eqref{1-variation}, instead it follows from \eqref{expansion-mc}
that  
\begin{equation}\label{some-thing}
\left.\frac d{dt}\right|_{t=0}(\int_\Omega fR[g_t]d\textup{vol}[\bar g] + 2\int_{\partial\Omega}fH[g]d\sigma[\bar g]) = 0
\end{equation} 
for $g_t = \bar g + th$, where $h|_{T\partial\Omega}$ vanishes, as observed in \cite{C-M-T}. An immediate consequence of \eqref{expansion-mc} is 
\begin{equation}\label{main-use-mc}
\aligned
(2 - h_{nn})(H[g] - H[\bar g])  & = - (1 - h_{nn})(2h_{ni,}^{\quad i} - h_{i \ , n}^{\ i}) \\
+ (h_{nn} - \frac 34h_{nn}^2 + h_{in} & h^i_{\ n}) H[\bar g] + O(|h|^2(|\nabla h|+ |h|)).
\endaligned
\end{equation}
For the convenience we denote 
\begin{align}\label{I_Omega}
I_{\Omega} = \frac 14 \int_\Omega 
\left( |\nabla h|^2 + |d(\textup{Tr} h)|^2 -2\mathscr{R}(h,h) \right) f d\textup{vol}[\bar{g}]
\end{align}
and
\begin{align}
B_{\Omega}=& \int_{\partial\Omega} \left( - (\textup{Tr} h) \partial_{\nu} f + h(\nu, \nabla f)- \frac{1}{2} h^2(\nu, \nabla f) 
+ \frac{1}{2} |h|^2 \partial_{\nu} f  + (\textup{Tr} h) h(\nu, \nabla f) \right) d\sigma[\bar{g}]\\ \notag
&+ \int_{\partial\Omega} \left( \partial_{\nu} (\textup{Tr} h) - \frac{1}{2} \partial_{\nu}(|h|^2)  - \frac{1}{2} \delta(h^2)\cdot\nu 
- h(\nabla(\textup{Tr} h), \nu) \right)f  d\sigma[\bar{g}].
\end{align}
Therefore we may write
\begin{equation}\label{taylor-2}
\mathscr{F}[g] - \mathscr{F}[\bar{g}] - \mathscr{F}'[\bar{g}] (h) - \frac{1}{2} \mathscr{F}''[\bar{g}](h,h) = \int_{\Omega} \left( R[g] - R[\bar{g}] \right)fd\textup{vol}[\bar g]  
+ I_\Omega + B_\Omega
\end{equation}
when $h = g - \bar g$ satisfies \eqref{t-boundary-condition}.
\\

\begin{proposition}\label{I_Sigma}
Assume that 
$$
|h| < \frac{1}{2}, \quad \delta h = 0, \text{ and } h_{T\partial\Omega} = 0.
$$
Then
\begin{align}\label{bdy-term}
B_\Omega =& \int_{\partial\Omega} \left( (2 - h_{nn}) \left( H[g] - H[\bar g] \right)  + \frac{1}{2} A^{ij} h_{in} h_{jn} +  \frac{1}{4}|h|^2 H[\bar g]\right)f d\sigma[\bar{g}] \\
+ \int_{\partial\Omega} & \left( h_{nn}h(\nu, \nabla f) + \frac{1}{4}\left(|h|^2 - h^2_{nn}  \right) \partial_{\nu} f \right) d\sigma[\bar{g}] 
+ O \left( \int_{\partial\Omega} |h|^2 (|\nabla h| +  |h|) d\sigma[\bar{g}] \right), \notag
\end{align}
where 
$$
|O(\int_\Omega |h|^2(|\nabla h| + |h|)d\textup{vol}[\bar g]) |\leq C \int_\Omega |h|^2(|\nabla h| + |h|)d\textup{vol}[\bar g]
$$
for some constant $C$ that only depends on $n$.
\end{proposition}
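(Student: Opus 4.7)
The plan is to evaluate every integrand of $B_\Omega$ pointwise on $\partial\Omega$, exploiting the boundary condition $h_{ij}=0$ (tangential vanishing), the divergence-free condition $\delta h=0$, and the Fermi-coordinate identities \eqref{covariant-derivative-at-bdy}, so that each contribution is expressed in terms of only three kinds of data at $\partial\Omega$: (a) pointwise boundary values $h_{nn}$ and $h_{ni}$; (b) the normal derivative combination $2h_{ni,}^{\ \ i} - h_{i\ ,n}^{\ i}$, which will be eliminated via the mean-curvature expansion \eqref{main-use-mc}; and (c) purely intrinsic boundary quantities like $A_{ij}$, $H[\bar g]$ and tangential derivatives of $h_{ni}, h_{nn}$ (these will all enter the error or the named terms). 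First I would note that on $\partial\Omega$,
\[
\textup{Tr}\,h = h_{nn}, \qquad |h|^2 = h_{nn}^2 + 2\,h_{in}h^{i}{}_{n}, \qquad h(\nu,\nabla f) = h_{nn}\partial_\nu f + h_{ni}\nabla^i f,
\]
and similarly for $h^2(\nu,\nabla f)$, so the pointwise-in-$h$ pieces of $B_\Omega$ (the ones not containing derivatives of $h$) contribute directly to the two named terms $h_{nn}h(\nu,\nabla f)$ and $\tfrac14(|h|^2-h_{nn}^2)\partial_\nu f$, plus explicit lower-order pieces.

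Next I would handle the derivative-of-$h$ terms $\partial_\nu(\textup{Tr}\,h)$, $\partial_\nu(|h|^2)$, $\delta(h^2)\!\cdot\!\nu$ and $h(\nabla(\textup{Tr}\,h),\nu)$ at the boundary. Using \eqref{covariant-derivative-at-bdy} (in particular $h_{\alpha n,}^{\ \ n}=-h_{\alpha i,}^{\ \ i}$, $h_{i\ ,k}^{\ j} = A_{jk}h_{in}+A_{ik}h_{jn}$, and $h_{ni,}^{\ \ i}=(\nabla^{\partial\Omega})^i h_{in}+H h_{nn}$) each of these normal derivatives breaks into a multiple of the distinguished combination $N:=2h_{ni,}^{\ \ i}-h_{i\ ,n}^{\ i}$ plus intrinsic tangential pieces (which produce $A^{ij}h_{in}h_{jn}$, $H[\bar g]h_{in}h^{i}{}_n$, and terms of the allowed error size $|h|^2(|\nabla h|+|h|)$). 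Tangential derivatives of $h_{ni}$ and $h_{nn}$ that appear can be integrated by parts along $\partial\Omega$ and absorbed either into the named $A^{ij}h_{in}h_{jn}$ term (through the Codazzi-type identity applied to $h$) or into the $O(\cdot)$ error.

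The crucial step is then to replace $N$ by the mean-curvature deficit using \eqref{main-use-mc}: on $\partial\Omega$,
\[
(1-h_{nn}) N = -(2-h_{nn})(H[g]-H[\bar g]) + \bigl(h_{nn}-\tfrac34 h_{nn}^2 + h_{in}h^{i}{}_n\bigr) H[\bar g] + O(|h|^2(|\nabla h|+|h|)).
\]
Since $|h|<1/2$, $(1-h_{nn})$ is invertible and the geometric series of $h_{nn}$ produces only $O(|h|^2(|\nabla h|+|h|))$ corrections. Substituting this for $N$ in the rewritten $B_\Omega$ converts the bulk of the derivative terms into $(2-h_{nn})(H[g]-H[\bar g])f$ plus $\tfrac14|h|^2 H[\bar g]f$ (after combining the $\tfrac34 h_{nn}^2 H$ piece with the remaining $h_{in}h^i{}_n H$ piece and the pointwise $H$-contributions). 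Collecting all pieces produces exactly the right-hand side of \eqref{bdy-term}.

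The main obstacle will be neither the mean-curvature substitution nor the boundary identities individually, but the clean bookkeeping: distinguishing ambient from intrinsic covariant derivatives on $\partial\Omega$, matching the coefficient $\tfrac14$ on $|h|^2 H[\bar g]$ (which requires the quadratic-in-$h$ correction in \eqref{expansion-mc} to line up precisely with the quadratic pieces coming from $\partial_\nu(|h|^2)$ and $\delta(h^2)\!\cdot\!\nu$), and ensuring that every residual term is genuinely of the form $|h|^2(|\nabla h|+|h|)$ — this forces one to use $|h|<1/2$ in expanding $(1-h_{nn})^{-1}$ and similar factors.
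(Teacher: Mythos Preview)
Your proposal is correct and follows essentially the same route as the paper: split $B_\Omega$ into the non-derivative and derivative parts, evaluate each using the Fermi-coordinate identities \eqref{covariant-derivative-at-bdy}, integrate the tangential divergence $(\nabla^{\partial\Omega})^i((1-\tfrac12 h_{nn})h_{in})$ by parts along $\partial\Omega$, and then substitute via \eqref{main-use-mc}. One small simplification over your outline: after collecting terms the factor in front of $N$ is exactly $(1-h_{nn})$, so \eqref{main-use-mc} applies directly without inverting $(1-h_{nn})$ or any geometric-series expansion.
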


\begin{proof} First we calculate
\begin{align*}
B^1_\Omega :&= \int_{\partial\Omega}  \left( - (\textup{Tr} h) \partial_{\nu} f + h(\nu, \nabla f) + \frac{1}{2} |h|^2 \partial_{\nu} f - \frac{1}{2}h^2(\nu, \nabla f) 
+ (\textup{Tr} h)h(\nu, \nabla f) \right) d\sigma[\bar{g}]\\
&= \int_{\partial\Omega} \left( \left( h^2_{nn} + \frac{1}{2}h_{ni}h^i_{\ n} \right)\partial_\nu f + 
(1 + \frac{1}{2} h_{nn} ) h_{in} \cdot \partial^i f\right) d\sigma[\bar{g}].
\end{align*}
To get the second part of $B_\Omega$ we calculate
$$
\aligned
\nu(\textup{Tr}h) &= h_{n \ , n}^{\ n} + h_{i \ , n}^{\ i}\\
- \frac 12\nu(|h|^2) & = -h_{nn}h_{n \ , n}^{\ n} - 2h_n^{\ i}h_{ni,n}\\
-\frac 12\delta(h^2)\cdot\nu & = \frac 12h_{nn}h_{n \ , n}^{\ n} +\frac 12h_n^{\ i}h_{nn, i} +\frac 12 h_n^{\ i}h_{ni,n}\\
-h(\nabla(\textup{Tr} h), \nu) & = - h_{nn}(h_{nn,n} + h_{i \ , n}^{\ i}) - h_n^{\ i}(h_{nn, i} + h_{j \ , i}^{\ j}).
\endaligned
$$ 
Then, using \eqref{covariant-derivative-at-bdy}, we obtain
\begin{align*}
B_\Omega^2 :=& \int_{\partial\Omega}  \left( \partial_{\nu} (\textup{Tr} h) - \frac{1}{2} \partial_{\nu}(|h|^2)  - \frac{1}{2} \delta(h^2)\cdot\nu 
- h(\nabla(\textup{Tr} h), \nu) \right)f d\sigma[\bar{g}]\\
=& \int_{\partial\Omega} \left((\nabla^{\partial\Omega})^i \left( (1 - \frac{1}{2}h_{nn}) h_{in} \right) - ( 1 - h_{nn} )
\left( 2 h_{n \ , i}^{\ i} - h_{i \ , n}^{\ i} \right) \right) f d\sigma[\bar{g}] \\
&\quad\quad\quad\quad + \int_{\partial\Omega} \left( (1 - \frac{1}{2}h_{nn})h_{nn} H+ \frac{1}{2} A^{ij} h_{in} h_{jn} 
+ \frac{3}{2} Hh_{ni}h^i_{\ n} \right) f d\sigma[\bar{g}]\\
= & \int_{\partial\Omega} \left(- \left(1 - \frac{1}{2}h_{nn}\right) h_{in}\partial^i f  - ( 1 - h_{nn} )\left( 2 h_{n \ , i}^{\ i} - h_{i \ , n}^{\ i} \right) f \right) d\sigma[\bar{g}] \\
&\quad\quad\quad\quad + \int_{\partial\Omega} \left( (1 - \frac{1}{2}h_{nn})h_{nn} H+ \frac{1}{2} A^{ij} h_{in} h_{jn} 
+ \frac{3}{2} Hh_{ni}h^i_{\ n} \right) f d\sigma[\bar{g}].
\end{align*}
Therefore, adding $B_\Omega^1$ and $B_\Omega^2$, we arrive at
\begin{align*}
B_\Omega =& \int_{\partial\Omega}  \left( (1 - \frac{1}{2} h_{nn})h_{nn} H+ \frac{1}{2} A^{ij} h_{in} h_{jn}
+ \frac{3}{2} Hh_{ni}h^i_{\ n} \right) f d\sigma[\bar{g}] \\
& \quad\quad\quad\quad-  \int_{\partial\Omega} ( 1 - h_{nn} )\left( 2 h_{n \ , i}^{\ i} - h_{i \ , n}^{\ i} \right) f  d\sigma[\bar{g}] \\
& +  \int_{\partial\Omega} \left( h_{nn}h_{in} \partial^i f + \left(h^2_{nn} + \frac{1}{2}h_{ni}h^i_{\ n} \right) \partial_\nu f \right) d\sigma[\bar{g}].
\end{align*}
Finally, using \eqref{main-use-mc}, we may finish the calculation and establish \eqref{bdy-term}.
\end{proof}

Now we are ready to show the boundary integral is non-negative for 
small geodesic balls in a vacuum static spaces.

\begin{proposition}\label{boundary-est}
Suppose $(M^n, \ \bar g, \ f)$ is  a vacuum static space and that $g= \bar g+ h$ where
$$
|h| + |\nabla h| < \frac{1}{2}, \quad \delta h = 0, \text{ and } h_{T\partial\Omega} = 0.
$$
Then, for a $p_0\in M^n$ where $f(p_0)>0$,  there exists $r_ 0>0$ and $C>0$ such that
\begin{align*}
B_{B_r(p_0)} \geq - C \int_{\partial B_r(p_0)} |h|^2 (|\nabla h| +  |h|) d\sigma[\bar{g}] 
\end{align*}
for any geodesic ball $B_r(p_0)$ with $r<r_0$, 
provided that 
$$
H[g] \geq H[\bar{g}] \text{ and on $\partial B_r(p_0)$}.
$$
\end{proposition}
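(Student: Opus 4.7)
The plan is to expand $B_{B_r(p_0)}$ via the formula of Proposition~\ref{I_Sigma} and to verify that, for sufficiently small $r$, every non-error contribution is either manifestly non-negative or can be absorbed into the genuinely positive leading boundary term that emerges on a small geodesic sphere.

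First I would fix $r_0>0$ small enough so that $f\ge c_0>0$ on $\overline{B_{r_0}(p_0)}$; this is possible since $f(p_0)>0$ and $f$ is continuous. I would then invoke the classical expansion of geodesic spheres in normal coordinates centered at $p_0$, namely $A_{ij}=\tfrac{1}{r}g_{ij}+O(r)$ and $H[\bar g]=\tfrac{n-1}{r}+O(r)$ as $r\to 0$. Shrinking $r_0$ if necessary, I may then assume that for all $r<r_0$ the second fundamental form $A$ of $\partial B_r(p_0)$ is positive definite and $H[\bar g]\ge \tfrac{n-1}{2r}$ on $\partial B_r(p_0)$.

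With these ingredients in place, Proposition~\ref{I_Sigma} writes $B_{B_r(p_0)}$ as the sum of several boundary integrals plus the desired error term. The hypothesis $H[g]\ge H[\bar g]$, together with $|h_{nn}|<1/2$ and $f>0$, makes the integral of $(2-h_{nn})(H[g]-H[\bar g])f$ non-negative. Positive definiteness of $A$ makes $\tfrac{1}{2}A^{ij}h_{in}h_{jn}$ non-negative pointwise. The decisive term is
$$
\int_{\partial B_r(p_0)} \tfrac{1}{4}|h|^2 H[\bar g] f \, d\sigma[\bar g] \;\ge\; \frac{(n-1)c_0}{8r}\int_{\partial B_r(p_0)} |h|^2\, d\sigma[\bar g],
$$
whereas the remaining $\nabla f$-terms $h_{nn}h(\nu,\nabla f)$ and $\tfrac{1}{4}(|h|^2-h_{nn}^2)\partial_\nu f$ are pointwise bounded in absolute value by $C_1|h|^2$, with $C_1$ depending only on $\sup_{B_{r_0}(p_0)}|\nabla f|$.

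It then suffices to take $r_0$ smaller still, so that $\tfrac{(n-1)c_0}{8r_0}>C_1$; this forces the decisive $\tfrac{1}{r}$-term to dominate and absorb the potentially negative $\nabla f$-contributions, leaving precisely the error $O\bigl(\int_{\partial B_r(p_0)}|h|^2(|\nabla h|+|h|)\,d\sigma[\bar g]\bigr)$ already built into Proposition~\ref{I_Sigma}. The main obstacle I anticipate is the careful bookkeeping required to ensure that the threshold $r_0$ and the constant $C$ depend only on $(\bar g, f, p_0)$ and not on $h$; since every estimate above relies only on $\bar g$, $f$, and the assumed $C^1$-bound $|h|+|\nabla h|<1/2$, this uniformity is readily arranged.
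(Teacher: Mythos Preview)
Your proposal is correct and follows essentially the same argument as the paper: both invoke the boundary formula of Proposition~\ref{I_Sigma}, use $f\ge c_0>0$ near $p_0$ and the hypothesis $H[g]\ge H[\bar g]$ to discard the $(2-h_{nn})(H[g]-H[\bar g])f$ term, bound the $\nabla f$-contributions by $C|h|^2$, and then exploit the small-sphere asymptotics $A_{ij}=\tfrac{1}{r}\bar g_{ij}+O(r)$, $H[\bar g]=\tfrac{n-1}{r}+O(r)$ so that the resulting $\tfrac{1}{r}$-positive term absorbs the $C|h|^2$ piece. The only cosmetic difference is that you single out $A^{ij}h_{in}h_{jn}\ge 0$ separately via positive definiteness of $A$, whereas the paper lumps it together with the $|h|^2H[\bar g]$ term before invoking the asymptotics; this is immaterial.
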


\begin{proof} First of all, for $p_0\in M^n$ with $f(p_0) >0$, one may have $r_1$ such that
$$
f(p) \geq \epsilon_1 \text{ and } f(p) + |\nabla f|(p) \leq \beta_1
$$
for $p\in B_{r_1}(p_0)$ and positive constants $\epsilon_1$ and $\beta_1$. Hence, from \eqref{bdy-term}, we obtain
$$
\aligned
B_{B_r(p_0)} & \geq  \int_{\partial B_r(p_0)} \left( \frac{1}{2} A^{ij} h_{in} h_{jn} +  \frac{1}{4}|h|^2 H[\bar g]\right)f d\sigma[\bar{g}] 
- C\int_{\partial B_r(p_0)}|h|^2d\textup{vol}[\bar g]  \\
& \quad\quad - C \int_{\partial B_r(p_0)} |h|^2 (|\nabla h| +  |h|) d\sigma[\bar{g}] 
\endaligned
$$
for some constant $C>0$, here we have used the assumptions that $(2 - h_{nn})(H[g] - H[\bar g])\geq 0$.
Therefore it is easy from here to finish the proof based on the geometry of small geodesic balls. Namely,
\begin{align*}
A_{ij} = \frac{1}{r} \bar{g}_{ij} + O(r) \text{ and }
H[\bar{g}] = \frac{n-1}{r} + O (r).
\end{align*}
\end{proof}

Before we give the estimate on the interior term, we study the following eigenvalue problem of the Laplace 
on symmetric 2-tensors. Namely we consider
\begin{align} \label{L-laplace}
\mu (\Omega) = \inf\{ \frac{\int_{\Omega} \frac 12|\nabla h|^2 d\textup{vol}[\bar{g}]}{\int_{\Omega} |h|^2 d\text{vol}[\bar{g}]}: 
h \not\equiv 0 \text{ and } h_{T\partial\Omega} = 0\}
\end{align}
The following is an easy but very useful fact to us (please see a similar result in \cite{K-P}).

\begin{lemma}\label{Euclidean-eigen} Suppose $(M^n, \ \bar g, \ f)$ is  a vacuum static space 
and that $B_r(p) $ is a geodesic ball of radius $r$ in $M^n$. Then, there are constants $r_0$ and $c_0$ 
such that
\begin{equation}\label{lambda-ball}
\mu (B_r(p)) \geq \frac {c_0}{r^2}
\end{equation}
for all point $p\in M^n$ and $r<r_0$.
\end{lemma}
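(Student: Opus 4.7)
The plan is to reduce the statement to a purely Euclidean eigenvalue estimate via a rescaling and compactness argument. The base case I would need is that there exists $\mu_0 > 0$ such that $\int_{B_1}\tfrac{1}{2}|\nabla h|^2\,dx \geq \mu_0 \int_{B_1}|h|^2\,dx$ for every nonzero symmetric $2$-tensor $h$ on the Euclidean unit ball $B_1 \subset \mathbb{R}^n$ with $h|_{T\partial B_1}=0$. This itself follows from a compactness argument: if the infimum were zero, Rellich--Kondrachov would produce a unit-$L^2$ Euclidean-parallel tensor $h^*$ (hence with constant Cartesian components) still satisfying the tangential vanishing condition in the $L^2$-trace sense. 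Testing this condition at the boundary points $e_1,\ldots,e_n$ in succession forces every component of $h^*$ to vanish, contradicting $\|h^*\|_{L^2}=1$.

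Next I would transplant a geodesic ball $B_r(p)$ to $B_1 \subset \mathbb{R}^n$ via the exponential map $\exp_p$ followed by the dilation $y \mapsto ry$. Setting $g_r(y) := (\exp_p^*\bar g)(ry)$ componentwise on $B_1$ and $\tilde h(y) := (\exp_p^* h)(ry)$, a short calculation (the Christoffel symbols of $g_r$ at $y$ equal $r$ times those of $\exp_p^*\bar g$ at $ry$) yields the scaling identity
$$\frac{\int_{B_1} \tfrac12|\nabla^{g_r}\tilde h|_{g_r}^2\,d\textup{vol}[g_r]}{\int_{B_1} |\tilde h|_{g_r}^2\,d\textup{vol}[g_r]} \;=\; r^2 \cdot \frac{\int_{B_r(p)} \tfrac12|\nabla h|^2\,d\textup{vol}[\bar g]}{\int_{B_r(p)} |h|^2\,d\textup{vol}[\bar g]}.$$
The condition $h|_{T\partial B_r(p)} = 0$ transforms cleanly into $\tilde h|_{T\partial B_1}=0$, being a metric-independent algebraic constraint, and as $r \to 0$ the metric $g_r$ converges to the Euclidean $\delta$ in $C^\infty(B_1)$, uniformly for $p$ in any compact subset of $M^n$.

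To conclude, I would argue by contradiction: if the lemma fails, there exist $p_k$ in a compact subset of $M^n$, radii $r_k \to 0^+$, and admissible $h^{(k)}$ whose rescaled Rayleigh quotient on $B_1$ tends to zero. Normalizing so that $\|\tilde h^{(k)}\|_{L^2(B_1,g_{r_k})}=1$ and extracting via Rellich--Kondrachov a subsequence converging strongly in $L^2$ and weakly in $H^1$ to some $h^*$, lower semicontinuity together with the metric convergence $g_{r_k}\to\delta$ forces $\int_{B_1}|\nabla^\delta h^*|^2\,dx = 0$, so $h^*$ is Euclidean-parallel with $\|h^*\|_{L^2}=1$. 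Weak continuity of the trace $H^1(B_1)\to L^2(\partial B_1)$ and the closedness of the linear condition $h|_{T\partial B_1}=0$ transfer the boundary vanishing to $h^*$, which now contradicts the base case.

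The main technical point is the uniform $C^\infty$ convergence of $g_{r_k}$ to $\delta$ as the base points $p_k$ vary in a compact set, and correspondingly the stated constants $r_0, c_0$ should be understood as uniform over compact subsets of $M^n$ (which is all that subsequent applications, such as Proposition~\ref{boundary-est}, require). Beyond this, the only delicate step is passing the mixed boundary condition through the weak $H^1$ limit, but this is clean because the condition is linear, metric-independent, and closed in the $L^2$-trace.
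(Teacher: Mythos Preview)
Your proof is correct and rests on the same two ingredients as the paper's: (i) rescaling a small geodesic ball to the Euclidean unit ball, and (ii) the fact that a parallel symmetric $2$-tensor on $B_1\subset\mathbb{R}^n$ with $h|_{T\partial B_1}=0$ must vanish, since the tangent spaces along $\partial B_1$ span $\mathbb{R}^n$. The execution differs: the paper separates the steps, first asserting a direct comparison $\mu(B_r(p))\geq c_1\,\mu(B_r^0(0))$ for small $r$ (the metric being $C^1$-close to Euclidean in normal coordinates), then invoking the exact Euclidean scaling $\mu(B_r^0)=r^{-2}\mu(B_1^0)$, and finally showing $\mu(B_1^0)>0$ by writing down the Euler--Lagrange equation for a minimizer and integrating by parts. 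You instead run a single blow-up/Rellich--Kondrachov contradiction argument that handles both steps at once. The paper's route is slightly more elementary; yours makes the dependence on the metric convergence more explicit. Your remark that the constants $r_0,c_0$ are really uniform only over compact subsets of $M^n$ is apt---the paper's phrase ``for all point $p\in M^n$'' tacitly assumes bounded geometry, but as you note only the local version near a fixed $p_0$ is used in Proposition~\ref{interior-est}.
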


\begin{proof} We first observe that there are constant $r_0$ and $c_1$ such that 
$$
\mu(B_r(p)) \geq c_1\mu(B^0_r(0))
$$
for all $p\in M^n$ and $r<r_0$, where $\mu(B^0_r(0))$ is the first eigenvalue for the Euclidean ball $B^0_r(0)$
with respect to the Euclidean metric.  Therefore it suffices to show \eqref{lambda-ball} for Euclidean balls with respect to the 
Euclidean metric. In fact, by scaling property, we simply need to show 
$$
\mu(B^0_1) >0.
$$
For this purpose, we consider the functional
\begin{align*}
J(h) = \frac{\int_{B_1^0} \frac 12 |\nabla h|^2 dx}{\int_{B_1^0} |h|^2 dx}, \ \ \ \forall h \in \mathscr{W}
\end{align*}
where $$\mathscr{W} = \{ h \in W^{1,2}(B_1^0): h \not\equiv 0, h|_{T{\partial B_1^0}} = 0 \}.$$
Then the Euler-Lagrange equation for the minimizers of  $J$ is
\begin{align*}
\begin{cases}
\Delta h + \mu(B_1^0) h = 0 \ \text{ in $B_1^0$} \\
h_{ij} = 0 \text{ and } \partial_\nu h_{in} = 0 \ \text{ on $\partial B_1^0$},
\end{cases}
\end{align*}
where $\mu (B_1^0) = \inf_{h \in \mathscr{W}} J(h) \geq 0$, if we use the spherical coordinate $\{\theta^1, \cdots, \theta^{n-1}, \nu\}$ on 
the unit sphere. What we need to show is that $\mu(B_1^0)$ is in fact positive. Assume otherwise
$\mu(B_1^0) = 0$. Then we may easily see that, by integral by parts, the eigen-tensor $h$ has to be parallel (constant) in $B_1^0$, which forces 
$h\equiv 0$ since all the tangent vectors at the boundary $\partial B_1^0$ together span the full space $\mathbb{R}^n$ (One may take the 
advantage to ignore the base point for a vector in the Euclidean space) . This finishes the proof.
\end{proof}

We remark that, in case the domain $\Omega$ is a square in the plane $\mathbb{R}^2$, the first eigenvalue $\mu$ is zero. Consequently, we have

\begin{proposition}\label{interior-est}  Suppose $(M^n, \ \bar g, \ f)$ is  a vacuum static space and that $g= \bar g+ h$ where
$$
\delta h = 0, \text{ and } h_{T\partial\Omega} = 0.
$$
Then, for a $p_0\in M^n$ where $f(p_0)>0$,  there exists $r_ 0>0$ such that
\begin{align*}
I_{B_r(p_0)} \geq \frac{1}{8}\int_{B_r(p_0)}(|\nabla h|^2 + |h|^2)d\textup{vol}[\bar g]
\end{align*}
for a geodesic ball $B_r(p_0)$ with radius $r< r_0$. 
\end{proposition}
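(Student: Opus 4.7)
The plan is to bound $I_{B_r(p_0)}$ from below directly from its definition
\[
I_{B_r(p_0)} = \frac{1}{4}\int_{B_r(p_0)} \bigl(|\nabla h|^2 + |d(\textrm{Tr}\, h)|^2 - 2\mathscr{R}(h,h)\bigr) f\, d\textrm{vol}[\bar g],
\]
by combining three ingredients: the nonnegativity of $|d(\textrm{Tr}\, h)|^2$, a pointwise estimate $|\mathscr{R}(h,h)| \le K|h|^2$ that follows because $\mathscr{R}$ is an algebraic quadratic form in $h$ whose coefficients are built from the Riemann, Ricci, and scalar curvatures of $\bar g$, and the Poincar\'e-type inequality supplied by Lemma \ref{Euclidean-eigen}. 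The positivity of $f$ near $p_0$, guaranteed by continuity since $f(p_0) > 0$, is what allows the $f$-weight to be peeled off the integrand in the final bound.

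First I would fix $r_1 > 0$ so that on $B_{r_1}(p_0)$ one has $0 < \epsilon \le f \le \beta$ and $|\mathscr{R}(h,h)| \le K|h|^2$ for constants $\epsilon, \beta, K$ depending only on $(M, \bar g, f, p_0)$ and $n$. Discarding the nonnegative term $|d(\textrm{Tr}\, h)|^2$ in the integrand then gives
\[
I_{B_r(p_0)} \ge \frac{\epsilon}{4}\int_{B_r(p_0)} |\nabla h|^2\, d\textrm{vol}[\bar g] - \frac{\beta K}{2}\int_{B_r(p_0)} |h|^2\, d\textrm{vol}[\bar g]
\]
for every $r \le r_1$. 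Since the gauge assumption forces $h_{T\partial B_r(p_0)} = 0$, Lemma \ref{Euclidean-eigen} applies and produces a constant $c_0 > 0$ with
\[
\int_{B_r(p_0)} |h|^2\, d\textrm{vol}[\bar g] \le \frac{r^2}{2 c_0}\int_{B_r(p_0)} |\nabla h|^2\, d\textrm{vol}[\bar g].
\]

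Choosing $r_0 \le r_1$ so small that $\beta K r_0^2/(4 c_0)$ is a suitably small fraction of $\epsilon/4$, the negative term in the previous display is absorbed into the gradient term, leaving a positive multiple of $\int|\nabla h|^2$. A final application of the same Poincar\'e inequality lets us replace $\int|\nabla h|^2$ by $\int(|\nabla h|^2 + |h|^2)$ up to a constant close to one (shrinking $r_0$ once more if needed), and the numerical value $\tfrac{1}{8}$ in the conclusion is simply the convenient constant that this chain of absorptions delivers. I expect no substantial obstacle beyond bookkeeping of constants; the one structural point to underline is that the strict lower bound $f \ge \epsilon > 0$ is indispensable when converting the $f$-weighted integral into an unweighted one, which is precisely the reason the hypothesis $f(p_0) \ne 0$ cannot be dropped and why this estimate, together with Proposition \ref{boundary-est}, only yields rigidity on small geodesic balls situated inside the open set $\{f \ne 0\}$.
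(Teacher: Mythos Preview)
Your proposal is correct and follows essentially the same route as the paper: bound $\mathscr{R}(h,h)$ pointwise by a multiple of $|h|^2$, discard the nonnegative $|d(\textup{Tr}\,h)|^2$ term, and invoke Lemma \ref{Euclidean-eigen} to absorb the negative $|h|^2$ term for small radii. The only cosmetic difference is that the paper keeps the weight $f$ throughout, splitting
\[
\frac{1}{4}\int (|\nabla h|^2 - C|h|^2)f = \frac{1}{8}\int(|\nabla h|^2+|h|^2)f + \frac{1}{8}\int(|\nabla h|^2 - (2C+1)|h|^2)f
\]
and then uses the lemma only on the second piece, whereas you strip the weight first via $\epsilon\le f\le\beta$ and absorb afterwards; both yield the same conclusion, and in both arguments the exact constant $\tfrac18$ relies on the harmless freedom to rescale the lapse so that $f\ge 1$ near $p_0$.
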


\begin{proof}
Recall \eqref{I_Omega},
\begin{align*}
I_{B_r(p_0)} = \frac{1}{4}  \int_{B_r(p_0)} \left( |\nabla h|^2 + |d(tr h)|^2 -2\mathscr{R}(h,h) \right) f d\textup{vol}[\bar{g}].
\end{align*}
Clearly there is a constant $C$ ($C$ depends on $(M^n, \ \bar g)$) such that
\begin{align*}
2\mathscr{R}(h,h) \leq C \ |h|^2.
\end{align*}
Hence
\begin{align*}
I_{B_r(p_0)}  &\geq \frac{1}{4}  \int_{B_r(p_0)} \left( |\nabla h|^2 - C  |h|^2 \right) f dvol_{\bar{g}}\\
&= \frac{1}{8} \int_{B_r(p_0)} \left( |\nabla h|^2 + |h|^2 \right) f dvol_{\bar{g}} + \frac{1}{8} \int_{B_r(p_0)} \left( |\nabla h|^2 - (2C  + 1) |h|^2 \right) f dvol_{\bar{g}}.
\end{align*}
The rest of proof easily follows from Lemma \ref{Euclidean-eigen}. 
\end{proof}

Now we are ready to prove the main theorem following the approach from \cite{B-M-N, B-M, C-M-T} .

\begin{proof}[Proof of Theorem \ref{rigidity_static_space}]

First, due to the assumption that $||g - \bar{g}||_{C^2(\overline{B_r(p_0)})}$ is sufficiently small, in the light of Lemma \ref{slice}, we may assume $g = \bar g + h$, 
where $h$ satisfies
$$
\delta h = 0 \text{ in $B_r(p_0)$ and } h|_{T\partial B_r(p_0)} = 0.
$$
Following the approach in \cite{B-M-N, B-M, C-M-T}, we have
\begin{align*}
\mathscr{F}(g) - \mathscr{F}(\bar{g}) - \mathscr{F}'(\bar{g})\cdot h - \frac{1}{2} \mathscr{F}''(\bar{g})\cdot (h, h) \leq C \|h\|_{C^2(\overline{B_r(p_0)})}\int_{B_r(p_0)}
(|\nabla h|^2 + |h|^2)d\text{vol}[\bar g]. 
\end{align*}
On the other hand, by Proposition \ref{boundary-est} and Proposition \ref{interior-est}, one arrives at
\begin{align*}
\int_{B_r(p_0)}
(|\nabla h|^2 + |h|^2)d\text{vol}[\bar g] & \leq C ||h||_{C^2(\overline{B_r(p_0)})}(\int_{B_r(p_0)}
(|\nabla h|^2 + |h|^2)d\text{vol}[\bar g] + \int_{\partial B_r(p_0)}|h|^2d\sigma[\bar g]) \\
& \leq  C_1 ||h||_{C^2(\overline{B_r(p_0)})} \int_{B_r(p_0)}
(|\nabla h|^2 + |h|^2)d\text{vol}[\bar g],
\end{align*}
by Trace Theorem of Sobolev spaces, which implies that $h \equiv 0$, when $\|h\|_{C^2(\overline{B_r(p_0)})}$ is small enough. Thus the proof is complete.
\end{proof}


\section{Local scalar curvature rigidity of space forms}\label{space-form}

In the previous section, we investigated the local scalar curvature rigidity of domains of sufficiently small size in general vacuum static spaces. In this section we consider 
the local scalar curvature rigidity of space forms. 

\subsection{Euclidean spaces} In this subsection, we consider local scalar curvature rigidity of domains in the Euclidean space $\mathbb{R}^n$. In Euclidean cases it 
turns out one has the local scalar curvature rigidity of bounded domain of any size, which may be compared with the rigidity result of closed flat spaces in \cite{F-M} 
but a much a weaker version of the rigidity in \cite{Miao} (cf. also \cite{S-T}), where the positive mass theorem is employed. In Euclidean cases the lapse function $f$ may be taken 
to be $1$. We calculate from \eqref{I_Omega} and \eqref{bdy-term} that
$$
I_{\Omega}  =  \frac 14 \int_\Omega \left( |\nabla h|^2 + |d(\textup{Tr} h)|^2 \right) dx
$$
and
$$
\aligned
B_\Omega = &  \int_{\partial\Omega} \left( (2 - h_{nn}) \left( H[g] - H[g_{\mathbb{R}^n}] \right)  + \frac{1}{2} A^{ij} h_{in} h_{jn} 
+  \frac{1}{4}|h|^2 H[g_{\mathbb{R}^n}]\right) d\theta \\
& \quad\quad + O \left( \int_{\partial\Omega} |h|^2 (|\nabla h| +  |h|) d\theta\right).
\endaligned
$$

\begin{theorem}\label{euclidean} Let $\Omega$ be a bounded smooth domain in the Euclidean space $\mathbb{R}^n$. Assume that
\begin{equation}\label{weak-convex}
A + \frac 12Hg_{\mathbb{R}^n} \geq 0 \text{ on $\partial\Omega$}.
\end{equation}
Then there is $\epsilon>0$ such that, 
for any Riemnnian metric $g$ on $\overline{\Omega}$ satisfying
\begin{itemize}
\item $g = g_{\mathbb{R}^n}$ on $\partial\Omega$,
\item $R[g] \geq 0$ in $\Omega$, 
\item $H[g] \geq H[g_{\mathbb{R}^n}]$ on $\partial\Omega$, and
\item $\|g - g_{\mathbb{R}^n}\|_{C^2(\overline{\Omega})} \leq \epsilon$,
\end{itemize}
there is a diffeomorphism $\varphi: {\Omega}\to {\Omega}$ such that $\varphi^*g = g_{\mathbb{R}^n}$ in $\Omega$
and $\varphi = id$ on $\partial\Omega$.
\end{theorem}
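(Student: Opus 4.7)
The plan is to follow the blueprint of the proof of Theorem \ref{rigidity_static_space}, while exploiting two simplifications specific to the Euclidean setting: the lapse function $f\equiv 1$ is constant, so that $\nabla f\equiv 0$, and the Riemann tensor vanishes, so that the curvature term $\mathscr R(h,h)$ that forced us to shrink to small geodesic balls in the general case is absent. These two simplifications mean I can work on any bounded smooth $\Omega$, with the weak convexity hypothesis \eqref{weak-convex} playing the role that the positivity of $f$ and the geometry of small balls played in Theorem \ref{rigidity_static_space}.

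First I would apply Lemma \ref{slice} to replace $g$ by $\varphi^{*}g$ so that $h:=\varphi^{*}g-g_{\mathbb R^n}$ is divergence-free in $\Omega$ and satisfies $h_{T\partial\Omega}=0$. Next I would establish a Poincar\'e-type inequality
\[
\int_\Omega |h|^2\,dx\le C\int_\Omega |\nabla h|^2\,dx
\]
for all symmetric $2$-tensors with $h_{T\partial\Omega}=0$, by repeating the argument in the proof of Lemma \ref{Euclidean-eigen} with $\Omega$ in place of $B_1^0$. Compactness of $W^{1,2}(\Omega)\hookrightarrow L^2(\Omega)$ ensures the infimum in \eqref{L-laplace} is attained, and if it were zero a minimizer would be parallel on $\Omega$, hence a constant symmetric tensor whose restriction to $T_p\partial\Omega$ vanishes at every boundary point $p$; since $\partial\Omega$ is a compact embedded hypersurface (not contained in any affine hyperplane), the union of its tangent hyperplanes spans $\mathbb R^n$, forcing $h\equiv 0$. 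This upgrades the interior quantity
\[
I_\Omega=\tfrac14\int_\Omega\bigl(|\nabla h|^2+|d(\textup{Tr}\,h)|^2\bigr)\,dx
\]
to a lower bound $I_\Omega\ge c_0\int_\Omega(|\nabla h|^2+|h|^2)\,dx$ for some $c_0>0$.

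For the boundary term $B_\Omega$, I would use the identity $|h|^2=2\sum_i h_{in}^2+h_{nn}^2$ on $\partial\Omega$ (valid because $h_{T\partial\Omega}=0$) and the assumption $A+\tfrac12 Hg_{\mathbb R^n}\ge 0$, which in particular forces $H\ge 0$, to deduce
\[
\tfrac12 A^{ij}h_{in}h_{jn}+\tfrac14|h|^2\,H[g_{\mathbb R^n}]\ge 0.
\]
Combined with $H[g]\ge H[g_{\mathbb R^n}]$ and $|h_{nn}|<\tfrac12$, the expression for $B_\Omega$ in Proposition \ref{I_Sigma} gives $B_\Omega\ge -C\int_{\partial\Omega}|h|^2(|\nabla h|+|h|)\,d\sigma$, which by the trace theorem is bounded below by $-C\|h\|_{C^1(\overline\Omega)}\int_\Omega(|\nabla h|^2+|h|^2)\,dx$.

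Finally I would plug these into the Taylor expansion \eqref{taylor-2}, together with the cubic error estimate used at the end of the proof of Theorem \ref{rigidity_static_space}, and invoke $R[g]\ge R[g_{\mathbb R^n}]=0$ to arrive at
\[
c_0\int_\Omega(|\nabla h|^2+|h|^2)\,dx\le C\|h\|_{C^2(\overline\Omega)}\int_\Omega(|\nabla h|^2+|h|^2)\,dx,
\]
which forces $h\equiv 0$ once $\epsilon$ is small enough. The main obstacle is the Poincar\'e inequality on a general bounded $\Omega$: verifying that a parallel symmetric $2$-tensor on $\Omega$ whose tangential components vanish on the entire boundary must be the zero tensor is not automatic and relies on the global geometry of $\partial\Omega$ as a compact embedded hypersurface. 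Once this is in place, everything else is a matter of substituting $f\equiv 1$ and $\mathscr R\equiv 0$ into the general framework of Section 3.
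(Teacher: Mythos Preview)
Your proposal is correct and follows essentially the same route as the paper: gauge-fix via Lemma \ref{slice}, use $f\equiv 1$ and $\mathscr R\equiv 0$ to reduce $I_\Omega$ to $\tfrac14\int_\Omega(|\nabla h|^2+|d(\textup{Tr}\,h)|^2)$, control $B_\Omega$ via the weak-convexity hypothesis \eqref{weak-convex}, and obtain the Poincar\'e inequality by the same parallel-tensor argument as in Lemma \ref{Euclidean-eigen}. The paper's own proof is terser---it leaves the boundary estimate and the spanning argument for $\partial\Omega$ implicit---but your elaboration of why $A+\tfrac12 Hg_{\mathbb R^n}\ge 0$ (hence $H\ge 0$) makes $\tfrac12 A^{ij}h_{in}h_{jn}+\tfrac14|h|^2H\ge 0$, and why the tangent spaces of a compact embedded hypersurface force a parallel $h$ to vanish, are exactly the details the paper is gesturing at.
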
 

\begin{proof} Again, in the light of Lemma \ref{slice}, we may assume that $g = g_{\mathbb{R}^n} + h$ and
$$
\delta h = 0 \text{ in $\Omega$ and } h|_{T\partial\Omega} = 0 \text{ on $\partial\Omega$}.
$$
Then, using the smoothness of the boundary $\partial\Omega$ and the fact that $h|_{T\partial\Omega} = 0$ on $\partial\Omega$, one
derives that
$$
\int_\Omega |\nabla h|^2 d\textup{vol}[\bar g] \geq \mu(\Omega) \int_\Omega |h|^2 d\textup{vol}[\bar g]
$$
for some positive number $\mu(\Omega)$, based on the argument similar to the one in the proof of Lemma \ref{Euclidean-eigen}. 
Therefore one can show that $h$ has to vanish in $\Omega$, whenever $\|h\|_{C^2(\overline{\Omega})}$ is sufficiently small. 
Thus the proof is complete.
\end{proof}

It is easily seen that, for example, \eqref{weak-convex} holds on convex domains including round balls in $\mathbb{R}^n$. 

\subsection{Hyperbolic spaces} The natural way to describe the hyperbolic space $\mathbb{H}^n$ is to identify it as the hyperboloid 
$$
\mathbb{H}^n = \{(t, x)\in \mathbb{R}^{n+1}: -t^2 + |x|^2 = -1 \text{ and } t > 0\}
$$
in the Minkowski space-time $(\mathbb{R}^{n+1}, \ -(dt)^2 + |dx|^2)$. In this coordinate 
$$
g_{\mathbb{H}^n} = \frac {(d|x|)^2}{1+|x|^2} + |x|^2 g_{\mathbb{S}^{n-1}}.
$$
With the lapse function $f = t = \sqrt{1 +|x|^2}$, the hyperbolic space $(\mathbb{H}^n, \ g_{\mathbb{H}^n})$ is a vacuum static space of negative cosmological constant
(cf. \cite{Qing}).
We will consider the geodesic balls $B^{\mathbb{H}}_r$ center from the vertex $(1, 0)$. We again calculate from  \eqref{I_Omega} and \eqref{bdy-term} that
$$
I_{B^{\mathbb{H}}_r}  =  \frac 14 \int_{B^{\mathbb{H}}_r} \left( |\nabla h|^2 + |d(\textup{Tr} h)|^2 - 2|h|^2 - 2 |\textup{Tr}h|^2 \right) t d\textup{vol}[g_{\mathbb{H}^n}]
$$
and
$$
\aligned
B_{B^{\mathbb{H}}_r}  & \geq  \int_{\partial B^{\mathbb{H}}_r} \left((2 - h_{nn})( H[g] - H[g_{\mathbb{H}^n}] )
+ (\frac 14 h_{nn}^2 + \frac n{2(n-1)}\sum_{i=1}^{n-1}h_{in}^2)H[g_{\mathbb{H}^n}] \right) \cosh rd\sigma[g_{\mathbb{H}^n}] \\
& + \int_{\partial B^{\mathbb{H}}_r} (\frac 34 h_{nn}^2 + \frac 14|h|^2)\sinh rd\sigma[g_{\mathbb{H}^n}]  
+ O \left( \int_{\partial B^{\mathbb{H}}_r} |h|^2 (|\nabla h| +  |h|) d\sigma[g_{\mathbb{H}^n}] \right),
\endaligned
$$
where $t = \cosh r$ and $\partial_\nu t = \sinh r$ are positive. 
\begin{theorem}\label{Hyperbolic_rigidity}
For $n\geq 3$, let $B_r^{\mathbb{H}}$ be the geodesic ball centered at the vertex $(1, 0)$ with radius $r > 0$ on the hyperboloid. 
There exists an $\varepsilon_0 > 0$, such that,  for any metric $g$ on $B_r^{\mathbb{H}}$ satisfying
\begin{itemize}
  \item $g = g_{\mathbb{H}^n}$ on $\partial B_r^{\mathbb{H}}$;
  \item$R[g] \geq -n(n-1)$ in $B_r^{\mathbb{H}}$; 
  \item $H[g] \geq H[g_{\mathbb{H}^n}]$ on $\partial B_r^{\mathbb{H}}$;
  \item $|| g - g_{\mathbb{H}^n}||_{C^2 (\overline{B_r^{\mathbb{H}}})} < \varepsilon_0$, 
\end{itemize}
there is a diffeomorphism $\varphi: {B_r^{\mathbb{H}}} \to {B_r^{\mathbb{H}}}$ such that $\varphi^*g = g_{\mathbb{H}^n}$ in $B_r^{\mathbb{H}}$
and $\varphi = id$ on $\partial B_r^{\mathbb{H}}$.
\end{theorem}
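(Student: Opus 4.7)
The plan is to follow the template of the proof of Theorem \ref{rigidity_static_space}, but with estimates that are valid on the hyperbolic geodesic ball $B_r^{\mathbb{H}}$ for \emph{any} radius $r>0$, rather than only for sufficiently small $r$. I would first apply the slice theorem (Lemma \ref{slice}) to reduce to $g = g_{\mathbb{H}^n} + h$ with $\delta h = 0$ in $B_r^{\mathbb{H}}$ and $h_{T\partial B_r^{\mathbb{H}}} = 0$, losing only an $r$-dependent constant in the $C^2$ norm. Combining the Taylor identity \eqref{taylor-2} with the hypothesis $R[g] \ge -n(n-1) = R[g_{\mathbb{H}^n}]$ and the positivity of the lapse $t = \cosh r > 0$ yields the upper bound
\begin{equation*}
I_{B_r^{\mathbb{H}}} + B_{B_r^{\mathbb{H}}} \;\le\; \mathscr{F}[g] - \mathscr{F}[\bar g] - \mathscr{F}'[\bar g]\cdot h - \tfrac{1}{2}\mathscr{F}''[\bar g](h,h) \;\le\; C\,\|h\|_{C^2(\overline{B_r^{\mathbb{H}}})} \int_{B_r^{\mathbb{H}}}\bigl(|\nabla h|^2 + |h|^2\bigr)\,d\textup{vol}[g_{\mathbb{H}^n}],
\end{equation*}
the last step being the standard cubic Taylor remainder for the scalar curvature map.

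Next, I would bound $B_{B_r^{\mathbb{H}}}$ from below, in the spirit of Proposition \ref{boundary-est}. Each of the quadratic leading contributions in the explicit formula for $B_{B_r^{\mathbb{H}}}$ recalled before the theorem statement is multiplied by one of the positive scalars $\cosh r\cdot H[g_{\mathbb{H}^n}] > 0$ or $\sinh r > 0$, and the mean-curvature-difference term $(2 - h_{nn})(H[g] - H[g_{\mathbb{H}^n}])\cosh r$ is non-negative because $|h_{nn}| < 1/2$ and $H[g] \ge H[g_{\mathbb{H}^n}]$. Consequently
\begin{equation*}
B_{B_r^{\mathbb{H}}} \;\ge\; -\,C(r) \int_{\partial B_r^{\mathbb{H}}} |h|^2\bigl(|h| + |\nabla h|\bigr)\,d\sigma[g_{\mathbb{H}^n}],
\end{equation*}
and the resulting cubic boundary integral is absorbed by $\|h\|_{C^2}\int_{B_r^{\mathbb{H}}}(|\nabla h|^2 + |h|^2)\,d\textup{vol}$ via the trace theorem once $\|h\|_{C^2}$ is small.

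The main obstacle is the interior estimate: I need an analogue of Proposition \ref{interior-est} of the form
\begin{equation*}
I_{B_r^{\mathbb{H}}} \;\ge\; c_r \int_{B_r^{\mathbb{H}}}\bigl(|\nabla h|^2 + |h|^2\bigr)\,d\textup{vol}[g_{\mathbb{H}^n}]
\end{equation*}
\emph{without} the small-radius restriction of Lemma \ref{Euclidean-eigen}. Writing $I_{B_r^{\mathbb{H}}}$ explicitly, this reduces to a weighted Poincar\'e-type inequality dominating $2|h|^2 + 2|\textup{Tr}\,h|^2$ together with a small positive multiple of $|h|^2$ by $|\nabla h|^2 + |d(\textup{Tr}\,h)|^2$ in $L^2(\cosh r\,d\textup{vol}[g_{\mathbb{H}^n}])$, over divergence-free symmetric two-tensors with vanishing tangential trace on $\partial B_r^{\mathbb{H}}$. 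I would attack this via a Rayleigh-quotient argument paralleling the proof of Lemma \ref{Euclidean-eigen}: minimizers satisfy an elliptic Euler--Lagrange system on $B_r^{\mathbb{H}}$, and a zero infimum would be ruled out because the rigidity equation combined with $\delta h = 0$ and $h_{T\partial B_r^{\mathbb{H}}} = 0$ admits no nontrivial solution. Once this coercivity is in hand, combining it with the Taylor upper bound in the first paragraph and absorbing the boundary integral via the trace theorem forces $h \equiv 0$ whenever $\varepsilon_0$ is chosen sufficiently small depending on $r$, completing the proof.
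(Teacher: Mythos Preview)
Your overall template (slice theorem, Taylor remainder, boundary control, then interior coercivity) is the right one, and your boundary lower bound is correct. The gap is in the interior step. You discard the positive quadratic boundary contributions in $B_{B_r^{\mathbb{H}}}$ and then try to prove
\[
I_{B_r^{\mathbb{H}}}=\tfrac14\int_{B_r^{\mathbb{H}}}\bigl(|\nabla h|^2+|d(\textup{Tr}h)|^2-2|h|^2-2|\textup{Tr}h|^2\bigr)\,t\,d\textup{vol}\ \ge\ c_r\int_{B_r^{\mathbb{H}}}(|\nabla h|^2+|h|^2)\,d\textup{vol}
\]
for \emph{every} $r>0$ by a Rayleigh-quotient argument modeled on Lemma~\ref{Euclidean-eigen}. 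That lemma works because a vanishing infimum forces $\nabla h=0$, hence $h$ is parallel, and the boundary condition then kills $h$. Here the functional already contains the negative zero-order term $-2|h|^2-2|\textup{Tr}h|^2$, so a vanishing (or nonpositive) infimum does \emph{not} give $\nabla h=0$; the Euler--Lagrange system is a weighted Schr\"odinger-type equation in $h$, and you give no argument that it has no nontrivial solution under $\delta h=0$, $h|_{T\partial B_r^{\mathbb{H}}}=0$. In particular, for $n=3$ and large $r$ there is no reason a priori that the weighted Poincar\'e constant exceeds the threshold $2$ you need, so this is a genuine, not merely cosmetic, gap.

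The paper closes exactly this gap by \emph{not} throwing away the positive quadratic boundary terms. It keeps $\int_{\partial B_r^{\mathbb{H}}}(|h|^2+|\textup{Tr}h|^2)\,\partial_\nu t\,d\sigma$ and converts it, via the divergence theorem, into interior terms using $\Delta t=nt$ and $|\nabla t|<t$:
\[
\int_{\partial B_r^{\mathbb{H}}}(|h|^2+|\textup{Tr}h|^2)\partial_\nu t\,d\sigma
\ \ge\ (n-a)\int_{B_r^{\mathbb{H}}}(|h|^2+|\textup{Tr}h|^2)\,t\,d\textup{vol}
-\tfrac1a\int_{B_r^{\mathbb{H}}}(|\nabla h|^2+|d(\textup{Tr}h)|^2)\,t\,d\textup{vol}.
\]
The factor $n$ coming from $\Delta t=nt$ is precisely what beats the $-2|h|^2-2|\textup{Tr}h|^2$ in $I_{B_r^{\mathbb{H}}}$; after the explicit choice $a=\tfrac{11}{6}$, $b=\tfrac34$ one gets a strictly positive multiple of $\int(|\nabla h|^2+|h|^2)t$ for all $r>0$ and all $n\ge3$ (with equality tightest at $n=3$). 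In other words, the interior term alone is not known to be coercive; the boundary term supplies the missing positivity, and the identity $\Delta t=nt$ is the mechanism that transfers it inside. If you restore those boundary terms and run this integration by parts, your argument goes through; as written, the Rayleigh-quotient step does not.
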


\begin{proof} Again, in the light of Lemma \ref{slice}, we may assume that $g = g_{\mathbb{H}^n} + h$ and
$$
\delta h = 0 \text{ in $\Omega$ and } h|_{T\partial\Omega} = 0 \text{ on $\partial\Omega$}.
$$
From the assumptions we have 
\begin{equation}\label{first-equation}
\aligned
I _{B^{\mathbb{H}}_r} + B_{B^{\mathbb{H}}_r} &  \geq \frac 14 \int_{B^{\mathbb{H}}_r} \left( |\nabla h|^2 
+ |d(\textup{Tr} h)|^2 - 2|h|^2 -2 |\textup{Tr} h|^2 \right) t d\textup{vol}[g_{\mathbb{H}^n}] + \frac 14\int_{\partial B^{\mathbb{H}}_r} |h|^2\cosh rd\sigma[g_{\mathbb{H}^n}] \\
& +  \int_{\partial B^{\mathbb{H}}_r} (\frac 34 h_{nn}^2 + \frac 14|h|^2)\partial_\nu td\sigma[g_{\mathbb{H}^n}]  
 -  C \int_{\partial B^{\mathbb{H}}_r} |h|^2 (|\nabla h| +  |h|) d\sigma[g_{\mathbb{H}^n}]
 \endaligned
\end{equation}
for some constant $C>0$. Similar to the idea used in \cite{C-M-T}, we  want to use the positive boundary terms to help to cancel the negative interior terms. For that,   
we perform integral by parts and estimate
$$
\aligned
 \int_{\partial B^{\mathbb{H}}_r} & (|\textup{Tr}h|^2 + |h|^2)\partial_\nu td\textup{vol}[g_{\mathbb{H}^n}] 
 =  \int_{B^{\mathbb{H}}_r} \text{div}((|\textup{Tr}h|^2 + |h|^2)\nabla t)d\textup{vol}[g_{\mathbb{H}^n}] \\
 & =  \int_{B^{\mathbb{H}}_r} (|\textup{Tr}h|^2 + |h|^2)\Delta t d\textup{vol}[g_{\mathbb{H}^n}] + 2
 \int_{B^{\mathbb{H}}_r} ((\textup{Tr}h)\nabla (\textup{Tr}h) + h\nabla h)\cdot\nabla t d\textup{vol}[g_{\mathbb{H}^n}]\\
 &\geq  n \int_{B^{\mathbb{H}}_r} (|\textup{Tr}h|^2 + |h|^2)td\textup{vol}[g_{\mathbb{H}^n}] - 2 \int_{B^{\mathbb{H}}_r} 
 (|\textup{Tr}h||\nabla \textup{Tr}h| + |h||\nabla h|)|\nabla t |d\textup{vol}[g_{\mathbb{H}^n}]\\
 &\geq n \int_{B^{\mathbb{H}}_r} (|\textup{Tr}h|^2 + |h|^2)td\textup{vol}[g_{\mathbb{H}^n}] -  \int_{B^{\mathbb{H}}_r} 
 ( a(|\textup{Tr}h|^2+|h|^2) + \frac 1a(|\nabla \textup{Tr}h|^2 +  |\nabla h|^2)) t d\textup{vol}[g_{\mathbb{H}^n}].
 \endaligned
$$
Here we use the fact that $\Delta t = n t$ and $|\nabla t|< t$. Going back to \eqref{first-equation} we get, for the choices $b= \frac 34$ and $a = \frac {11}6$, 
\begin{align*}
I _{B^{\mathbb{H}}_r} + B_{B^{\mathbb{H}}_r} &  \geq  \int_{B^{\mathbb{H}}_r} \left((\frac 14 - \frac {(1+b)}{4a}) (|\nabla h|^2 
+ |d(\textup{Tr} h)|^2) + (\frac {(1+b)}4(n-a) - \frac 12) (|h|^2 + |\textup{Tr} h|^2) \right) t d\textup{vol}[g_{\mathbb{H}^n}] \\
& +\frac {1-b}4 \int_{\partial B^{\mathbb{H}}_r} |h|^2\cosh rd\sigma[g_{\mathbb{H}^n}]+\frac {2-b}4 \int_{\partial B^{\mathbb{H}}_r} h_{nn}^2 \sinh rd\sigma[g_{\mathbb{H}^n}]
-  C \int_{\partial B^{\mathbb{H}}_r} |h|^2 (|\nabla h| +  |h|) d\sigma[g_{\mathbb{H}^n}]\\
 & =   \int_{B^{\mathbb{H}}_r} \left(\ \frac 1{88} (|\nabla h|^2 
+ |d(\textup{Tr} h)|^2) + \frac 7{16}(n-3 +\frac 1{42}) (|h|^2 + |\textup{Tr} h|^2) \right) t d\textup{vol}[g_{\mathbb{H}^n}] \\
& +\frac 1{16} \int_{\partial B^{\mathbb{H}}_r} |h|^2\cosh rd\sigma[g_{\mathbb{H}^n}] +\frac {5}{16} \int_{\partial B^{\mathbb{H}}_r} h_{nn}^2 \sinh rd\sigma[g_{\mathbb{H}^n}]
-  C \int_{\partial B^{\mathbb{H}}_r} |h|^2 (|\nabla h| +  |h|) d\sigma[g_{\mathbb{H}^n}].
\end{align*}
Now one may conclude that $h=0$, when $||h||_{C^2 (\overline{B_r^{\mathbb{H}}})}$ is sufficiently small and $n\geq 3$.
\end{proof}

\subsection{Hemispheres} The upper hemisphere $\mathbb{S}^n_+$ with the standard round metric is a vacuum static space of positive 
cosmological constant, where the lapse function $f = x_{n+1}$ is the high function when the sphere is the unit round sphere $\mathbb{S}^n$ centered at the origin in 
the Euclidean space $\mathbb{R}^{n+1}$. One considers the geodesic balls $B^{\mathbb{S}}_r$ centered at the north pole on the hemisphere. One then calculates
from  \eqref{I_Omega} and \eqref{bdy-term} that
$$
I_{B^{\mathbb{S}}_r}  =  \frac 14 \int_{B^{\mathbb{S}}_r} \left( |\nabla h|^2 + |d(\textup{Tr} h)|^2 + 2 |h|^2 + 2|\textup{Tr} h|^2 \right) x_{n+1} d\text{vol}[g_{\mathbb{S}^n}] 
$$
and
$$
\aligned
B_{B^{\mathbb{S}}_r} = &  \int_{\partial B^{\mathbb{S}}_r} \left( (2 - h_{nn}) \left( H[g] - H[g_{\mathbb{R}^n}] \right) + 
\left( \frac{1}{4}h^2_{nn} + \frac{n}{2(n-1)}\sum_{i=1}^{n-1} h^2_{in}\right) H[g_{\mathbb{R}^n}]\right) \cos r d\sigma[g_{\mathbb{S}^n}] \\
& - \int_{\partial B^{\mathbb{S}}_r} \left( h^2_{nn} + \frac{1}{2} \sum_{i=1}^{n-1}h^2_{in} \right) \sin r d\sigma[g_{\mathbb{S}^n}] 
+ O \left( \int_{\partial B^{\mathbb{S}}_r} |h|^2 (|\nabla h| +  |h|) d\sigma[g_{\mathbb{S}^n}] \right)\\
& \geq  \int_{\partial B^{\mathbb{S}}_r} \left(\left( \frac{n-1}{4} \frac{\cos^2 r}{\sin r} -\sin r \right)  h^2_{nn} 
+ \frac{1}{2}\left( n \frac{\cos^2 r}{\sin r} -\sin r \right)\sum_{i=1}^{n-1} h^2_{in} \right)d\sigma[g_{\mathbb{S}^n}] \\
&\quad\quad - C\int_{\partial B^{\mathbb{S}}_r} |h|^2 (|\nabla h| +  |h|) d\sigma[g_{\mathbb{S}^n}],
\endaligned
$$
where $f = f(r) = \cos r$ and $H[g_{\mathbb{S}^n}] = (n-1)\cot r$ for $\partial B^{\mathbb{S}}_r$ in the hemisphere.

\begin{theorem}\label{Thm:B_M} (\cite{B-M}) 
Consider the geodesic ball $B^{\mathbb{S}}_r$ with $\cos r \geq \frac{2}{\sqrt{n+3}}$. Let $g$ be a Riemannian metric on $B^{\mathbb{S}}_r$ with the following properties:
\begin{itemize}
\item $R[g] \geq n(n-1)$ in $B^{\mathbb{S}}_r$;
\item $H[g] \geq (n-1)\cot r$ on $\partial B^{\mathbb{S}}_r$;
\item $g$ and $g_{\mathbb{S}^n}$ induced the same metric on $\partial B^{\mathbb{S}}_r$,
\end{itemize}
If $g- g_{\mathbb{S}^n}$ is sufficiently small in the $C^2$-norm, then $\varphi^* (g) = g_{\mathbb{S}^n}$ for some diffeomorphism 
$\varphi: B^{\mathbb{S}}_r \rightarrow B^{\mathbb{S}}_r$ with $\varphi|_{\partial B^{\mathbb{S}}_r } = id$.
\end{theorem}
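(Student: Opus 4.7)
The plan is to execute the same Taylor-expansion argument used in the proofs of Theorem \ref{rigidity_static_space} and Theorem \ref{Hyperbolic_rigidity}, specialized to the spherical setting whose $I$ and $B$ have been computed just above the statement. First I would apply Lemma \ref{slice} to reduce to the gauge in which $h := g - g_{\mathbb{S}^n}$ satisfies $\delta h = 0$ in $B^{\mathbb{S}}_r$ and $h|_{T\partial B^{\mathbb{S}}_r} = 0$. Identity \eqref{taylor-2} then gives
\begin{equation*}
\mathscr{F}[g] - \mathscr{F}[\bar g] - \mathscr{F}'[\bar g](h) - \tfrac{1}{2}\mathscr{F}''[\bar g](h,h) = \int_{B^{\mathbb{S}}_r}(R[g] - R[\bar g]) f \, d\textup{vol}[\bar g] + I_{B^{\mathbb{S}}_r} + B_{B^{\mathbb{S}}_r},
\end{equation*}
with left-hand side bounded by $C\|h\|_{C^2}\int_{B^{\mathbb{S}}_r}(|\nabla h|^2 + |h|^2)\, d\textup{vol}[\bar g]$. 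The first term on the right is already nonnegative because $f = x_{n+1} > 0$ on $\mathbb{S}^n_+$ and $R[g] \geq n(n-1) = R[g_{\mathbb{S}^n}]$.

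The crucial contrast with the hyperbolic case is that the interior integral $I_{B^{\mathbb{S}}_r}$ is already manifestly coercive: in the displayed formula, every term in the integrand carries a positive coefficient, so $I_{B^{\mathbb{S}}_r} \geq c_0(r)\int_{B^{\mathbb{S}}_r}(|\nabla h|^2 + |h|^2)\, d\textup{vol}[g_{\mathbb{S}^n}]$ without any rearrangement or integration by parts. Thus the entire burden of the proof is to show that $B_{B^{\mathbb{S}}_r}$ is bounded below by a harmless cubic expression in $h$.

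This is exactly where the size condition $\cos r \geq \frac{2}{\sqrt{n+3}}$ intervenes. Reading the displayed lower bound for $B_{B^{\mathbb{S}}_r}$, I would observe that the coefficient $\tfrac{n-1}{4}\tfrac{\cos^2 r}{\sin r} - \sin r$ of $h_{nn}^2$ is nonnegative precisely when $(n+3)\cos^2 r \geq 4$, while the coefficient $\tfrac{1}{2}\bigl(n\tfrac{\cos^2 r}{\sin r} - \sin r\bigr)$ of $\sum_{i}h_{in}^2$ is nonnegative under the strictly weaker condition $(n+1)\cos^2 r \geq 1$. Combined with the hypothesis $H[g] \geq H[g_{\mathbb{S}^n}]$ and the fact that $h|_{T\partial B^{\mathbb{S}}_r}=0$ with $h_{nn}$ small, so that the $(2-h_{nn})(H[g]-H[g_{\mathbb{S}^n}])$ piece is nonnegative, one obtains
\begin{equation*}
B_{B^{\mathbb{S}}_r} \geq -C_1\int_{\partial B^{\mathbb{S}}_r}|h|^2(|\nabla h|+|h|)\, d\sigma[g_{\mathbb{S}^n}].
\end{equation*}

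Finally, combining these three ingredients with the Sobolev trace inequality $\int_{\partial B^{\mathbb{S}}_r}|h|^2\, d\sigma \leq C\int_{B^{\mathbb{S}}_r}(|\nabla h|^2 + |h|^2)\, d\textup{vol}$ produces
\begin{equation*}
c_0\int_{B^{\mathbb{S}}_r}(|\nabla h|^2 + |h|^2)\, d\textup{vol} \leq C_2\|h\|_{C^2(\overline{B^{\mathbb{S}}_r})}\int_{B^{\mathbb{S}}_r}(|\nabla h|^2 + |h|^2)\, d\textup{vol},
\end{equation*}
which forces $h \equiv 0$ once $\|h\|_{C^2}$ is sufficiently small; the desired diffeomorphism is the one already supplied by Lemma \ref{slice}. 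The only non-routine step I anticipate is verifying that the threshold $\cos r \geq \frac{2}{\sqrt{n+3}}$ is sharp for nonnegativity of the dominant $h_{nn}^2$-coefficient in $B_{B^{\mathbb{S}}_r}$, which reduces to the elementary algebraic identity $(n-1)\cos^2 r \geq 4\sin^2 r \Leftrightarrow (n+3)\cos^2 r \geq 4$; the geometric content is that the mean-curvature hypothesis can absorb the $h_{nn}$ boundary contribution only up to this size.
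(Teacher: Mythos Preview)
The paper does not give its own proof of this theorem; it is stated as a citation of Brendle--Marques \cite{B-M}, with the relevant interior and boundary integrals $I_{B^{\mathbb{S}}_r}$ and $B_{B^{\mathbb{S}}_r}$ computed immediately beforehand and left for the reader. Your proposal correctly assembles those computations into a proof along the pattern of Theorems \ref{rigidity_static_space} and \ref{Hyperbolic_rigidity}: gauge-fix via Lemma \ref{slice}, invoke \eqref{taylor-2}, note that $I_{B^{\mathbb{S}}_r}$ is manifestly coercive since $x_{n+1}\geq\cos r>0$ on the cap, verify that the size hypothesis $\cos r\geq 2/\sqrt{n+3}$ is exactly the nonnegativity threshold for the $h_{nn}^2$-coefficient in $B_{B^{\mathbb{S}}_r}$ (the $h_{in}^2$-coefficient being automatically nonnegative under the weaker bound $(n+1)\cos^2 r\geq 1$), and close with the trace inequality. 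This is precisely the Brendle--Marques argument as recast in the paper's framework, so your proposal is correct and aligned with the intended approach.
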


\begin{remark}
In \cite{C-M-T},  the interior integral 
$$
I_{B^{\mathbb{S}}_r}  =  \frac 14 \int_{B^{\mathbb{S}}_r} \left( |\nabla h|^2 + |d(\textup{Tr} h)|^2 + 2 |h|^2 + 2|\textup{Tr} h|^2 \right) x_{n+1} d\text{vol}[g_{\mathbb{S}^n}] 
$$
is used cleverly to improve the size of the geodesic ball $B^{\mathbb{S}}_r$ that is bigger than $\cos r \geq \frac{2}{\sqrt{n+3}}$ of Theorem \ref{Thm:B_M} of \cite{B-M}.
\end{remark}


\section{Conformal Rigidity of Static Space}\label{Conformal_Rigidity_Section}

In this section we consider the scalar curvature rigidity among conformal deformations. This is inspired by the work in \cite{H-W1}, where the scalar curvature
rigidity among conformal deformations of the hemispheres is established.  For $n \geq 2$, let $(M^n,\bar{g}, f)$ be a static space with positive scalar curvature 
$R[\bar{g}] > 0$. We denote $$\Omega^+ = \{ x \in M : f(x) > 0\}.$$

\begin{theorem}
Let $(M^n,\bar{g}, f)$ be a complete $n$-dimensional static space with $R_{\bar{g}} > 0$ ($n \geq 2$). Assume $\Omega^+$ is a pre-compact subset in $M$. 
Then, if a metric $g \in [\bar{g}]$ on $M$ satisfies that
\begin{itemize}
\item $R[g] \geq R[\bar{g}]$ in $\Omega^+$,
\item $g$ and $\bar{g}$ induced the same metric on $\partial \Omega^+$, and
\item $H[g] = H[\bar{g}]$ on $\partial \Omega^+$,
\end{itemize}
then $g =\bar{g}$. 
\end{theorem}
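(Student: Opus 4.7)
The plan is to encode the hypotheses as a scalar PDE for the conformal factor and then extract rigidity via a weighted integration against the lapse $f$. Writing $g = u^{4/(n-2)}\bar g$ for $n \geq 3$ (the $n=2$ case uses $g = e^{2u}\bar g$ and runs in parallel), the induced-metric hypothesis becomes $u = 1$ on $\partial \Omega^+$, the mean curvature hypothesis becomes $\partial_\nu u = 0$ on $\partial \Omega^+$, and the conformal transformation law for scalar curvature converts $R[g] \geq R[\bar g]$ into the differential inequality
\begin{equation*}
-\frac{4(n-1)}{n-2}\Delta u \geq R[\bar g]\bigl(u^{(n+2)/(n-2)} - u\bigr) \quad \text{in } \Omega^+.
\end{equation*}

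The key structural fact I would exploit is that $R[\bar g]$ is constant on a vacuum static space, and that tracing \eqref{static_2} gives $-\Delta f = \lambda f$ with $\lambda := R[\bar g]/(n-1) > 0$. Since $f > 0$ in $\Omega^+$ and $f|_{\partial\Omega^+} = 0$, the lapse $f$ realizes $\lambda$ as the first Dirichlet eigenvalue of $-\Delta$ on the precompact domain $\Omega^+$. Multiplying the above inequality by $f$ and integrating, Green's identity together with $f|_{\partial\Omega^+} = 0$, $u|_{\partial\Omega^+} = 1$, and $\int_{\partial \Omega^+} \partial_\nu f\, d\sigma[\bar g] = -\lambda \int_{\Omega^+} f\, d\textup{vol}[\bar g]$ reduces the left hand side to $\lambda \int_{\Omega^+} f(u-1)\, d\textup{vol}[\bar g]$. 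Setting $p := (n+2)/(n-2)$ and using the algebraic identity $\lambda = \frac{(n-2)R[\bar g]}{4(n-1)}(p-1)$, the inequality takes the clean form
\begin{equation*}
\int_{\Omega^+} f\,\Phi(u)\,d\textup{vol}[\bar g] \geq 0, \qquad \Phi(u) := pu - u^p - (p-1).
\end{equation*}

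To conclude, observe that $\Phi(1) = \Phi'(1) = 0$ and $\Phi''(u) = -p(p-1)u^{p-2} < 0$ for $u > 0$, so $\Phi$ is strictly concave on $(0,\infty)$ with unique global maximum $0$ at $u = 1$; in particular $\Phi(u) \leq 0$ pointwise, with equality iff $u = 1$. Combined with $f > 0$ in $\Omega^+$ and the integral inequality above, this forces $\Phi(u) \equiv 0$, hence $u \equiv 1$ on $\overline{\Omega^+}$, and so $g = \bar g$ there. The main obstacle I anticipate is not the concavity argument for $\Phi$, which is clean once the substitution $\lambda = \frac{(n-2)R[\bar g]}{4(n-1)}(p-1)$ is recognized, but rather: (i) justifying the integration by parts across the nodal set $\partial \Omega^+ = \{f = 0\}$, which I would handle via the Hopf lemma $\partial_\nu f < 0$ ensuring that $\partial \Omega^+$ is a smooth hypersurface; and (ii) promoting the rigidity from $\overline{\Omega^+}$ to all of $M$, where the mean curvature condition $\partial_\nu u = 0$ on $\partial \Omega^+$ enters, pinning down the outward $1$-jet of $u$ along the boundary and, together with a standard continuation argument on the smooth manifold $M$, forcing $u \equiv 1$ globally.
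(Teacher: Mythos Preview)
Your argument is correct and takes a genuinely different route from the paper's. The paper linearizes, writing $v=u-1$ and $-\Delta v - \Lambda(x)v \ge 0$ with a variable coefficient $\Lambda(x)$, then studies the quotient $\varphi=v/f$: a maximum-principle argument on $\varphi$ (using that $\Lambda(x)<\Lambda$ wherever $v<0$) first gives $v\ge 0$, and then the Hopf boundary lemma, combined with $v=\partial_\nu v=0$ on $\partial\Omega^+$, forces $v\equiv 0$. Your method---pairing the Yamabe inequality against the first Dirichlet eigenfunction $f$ and exploiting the strict concavity of $\Phi(u)=pu-u^p-(p-1)$---is a one-shot integral identity that avoids both the quotient construction and the L'H\^opital-type analysis of $v/f$ near $\partial\Omega^+$. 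A pleasant byproduct you did not note: since $f=0$ on $\partial\Omega^+$, the boundary term $f\,\partial_\nu u$ in Green's identity drops out, so your argument never actually uses the hypothesis $H[g]=H[\bar g]$; the paper's Hopf-lemma step genuinely requires $\partial_\nu v=0$, so your proof is in fact slightly stronger. On your two concerns: for (i), the regularity of $\partial\Omega^+=f^{-1}(0)$ comes from $\nabla f\neq 0$ there, which is the Fischer--Marsden result (via the static equation and unique continuation) rather than the Hopf lemma, which would be circular; for (ii), the paper itself only concludes $g=\bar g$ on $\overline{\Omega^+}$---that is all the hypotheses can yield---so no continuation to $M$ is needed. (One minor slip: the left side after Green's identity is $\frac{4(n-1)}{n-2}\lambda\int f(u-1)$, not $\lambda\int f(u-1)$; the extra factor is exactly what your identity $\lambda=\frac{(n-2)R[\bar g]}{4(n-1)}(p-1)$ absorbs, and your final inequality $\int f\,\Phi(u)\ge 0$ is correct.)
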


\begin{proof}  Since $g\in [\bar g]$ we may write as usual
$$
g = \left\{ \aligned  e^{2u}\bar g  & \quad \text{ when $n=2$}\\ 
u^\frac 4{n-2}\bar g & \quad \text{ when $n\geq 3$}.
\endaligned\right.
$$
Hence
$$
R[g] = \left\{\aligned e^{-2u} \left( R[\bar{g}] - 2 \Delta u \right)  & \text{ when $n=2$}\\ 
 u^{-\frac{n+2}{n-2}} \left( R[\bar{g}] u - \frac {4(n-1)}{n-2}\Delta u \right) & \text{ when $n\geq 3$} 
 \endaligned\right.
\text{ and } 
 H[g] = \left\{ \aligned H[\bar g] + 2\partial_\nu u   & \text{ when $n=2$}\\ 
 H[\bar g] + \frac {2(n-1)}{n-2}\partial_\nu u & \text{ when $n\geq 3$}.
\endaligned\right.
 $$
If we let 
\begin{align*}
\Lambda (x) =
\begin{cases}
\frac{ R[\bar{g}] (e^{2u(x)} - 1)}{2 u(x)} = R[\bar g] e^{2\xi} & \quad \text{ when $n = 2$} \\
\frac{\frac {n-2}{4(n-1)} R[\bar{g}] u(x) \left(u(x)^{\frac{4}{n-2}} - 1\right)}{u(x)-1} = \frac {R[\bar g]}{n-1} \xi^\frac 4{n-2} \frac u \xi 
& \quad \text{ when $n\geq 3$},
\end{cases}
\end{align*}
where $\xi$ is between $0$ and $u(x)$ when $n=2$; $\xi$ is between $1$ and $u(x)$ when $n\geq 3$, and
\begin{align*}
v(x) = \begin{cases} u(x) & \quad \text{ when $n = 2$} \\
u(x) - 1& \quad \text{ when $n\geq 3$},
\end{cases}
\end{align*}
then we may rewrite the assumptions $R[g]\geq R[\bar g]$ and $H[g]\geq H[\bar g]$ as follows:
\begin{equation}\label{scalar-cur}
\left\{\aligned - \Delta v - \Lambda(x) v & \geq 0 \text{ in $\Omega^+$}\\
v & = 0 \text{ on $\partial\Omega^+$}\\
\partial_\nu v & = 0 \text{ on $\partial\Omega^+$}.
\endaligned\right.
\end{equation}
On the other hand, if denote $\Lambda = \frac {R[\bar g]}{n-1}$, we deduce from the static equation that 
\begin{equation}\label{lapse-equ}
- \Delta f - \Lambda f = 0 \text{ and } f > 0 \text{ in $\Omega^+$}.
\end{equation}
In the following we want to first use the positive lapse function $f$ in $\Omega^+$ to show $v\geq 0$. To do so we consider the quotient 
$\varphi(x) = \frac{v(x)}{f(x)}$ in $\Omega^+$ and calculate
\begin{equation}\label{quotient-equ}
\aligned
- \Delta \varphi &= \frac{1}{f} \left( - \Delta v + \varphi \Delta f  + 2 \nabla \varphi\cdot \nabla f \right)\\
& \geq - \varphi \left(\Lambda - \Lambda(x)\right) + 2 \nabla\varphi\cdot\frac {\nabla f}f.
\endaligned
\end{equation}
Assume otherwise that there exists $\bar{x} \in \Omega^+$ such that $\varphi(\bar{x}) < 0$. In order to apply the maximum principle we would like to use L$'$hospital$'$s 
rule to see that $\varphi = 0 $ on $\partial\Omega^+$. Here we need to use that fact that $\nabla f\neq 0$ at $\partial\Omega^+ = \{x\in M^n: f(x) = 0\}$ according
\cite[Theorem 1]{F-M}. Therefore we may assume that $\bar x$ be a minimum point for $\varphi$. Notice that $\Lambda (\bar{x}) < \Lambda$ when $\varphi(\bar{x}) <0$.
Thus, from \eqref{quotient-equ}, we arrive at
$$
0 \geq -\Delta \varphi (\bar{x}) \geq - \varphi (\bar{x}) \left(\Lambda - \Lambda(\bar{x})\right) > 0,
$$
which is a contradiction. Therefore we have shown that $v(x)\geq 0$ in $\Omega^+$. 
\\

Finally, applying the Hopf maximum principle, for instance, \cite[Theorem 7.3.3]{C-L}, to \eqref{scalar-cur},  we conclude that
$v \equiv 0$ in $\Omega^+$, that is,  $ g \equiv \bar g$ in $\Omega^+$. So the proof is compete.
\end{proof}

Next we want to show that the domain $\Omega^+$ is the biggest of which the scalar curvature rigidity holds. This generalizes the work in \cite{H-W1}. 
Our construction is different from \cite{H-W1} and is based on the idea in \cite{B-M-N},  particularly \cite[Theorem 5 and Lemma 21]{B-M-N}.

\begin{theorem} Let $(M^n,\bar{g}, f)$ be a complete $n$-dimensional static space with $R_{\bar{g}} > 0$ ($n \geq 2$). Assume the level set $\Omega^+$ is 
a pre-compact subset in $M$.  For any open domain $\Omega$ in $M^n$ that contains $\overline{\Omega^+}$ , there is a smooth 
metric $g \in [\bar{g}]$ such that 
\begin{itemize}
\item $R[g] > R[\bar{g}]$ at some point in $\Omega$ and
\item $\textup{supp}(g - \bar{g}) \subset \Omega$
\end{itemize}
\end{theorem}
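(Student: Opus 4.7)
The plan is to construct $g$ as a localized conformal perturbation near a boundary point of $\Omega^+$, adapting the construction in \cite{B-M-N}. Because $\overline{\Omega^+}\subsetneq\Omega$ with $\Omega$ open, I would first pick $p\in\partial\Omega^+$ so that every neighborhood of $p$ meets both $\Omega^+$ and $\Omega\setminus\overline{\Omega^+}$; by \cite[Theorem 1]{F-M} (already invoked in the rigidity half), $\nabla f(p)\neq 0$, so $f$ vanishes transversally at $p$ and changes sign on any small ball $U\Subset\Omega$ centered at $p$. All of the perturbation will be supported in $U$.

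Writing $g=u^{4/(n-2)}\bar g$ (resp.\ $g=e^{2u}\bar g$ when $n=2$) with $u=1+v$ (resp.\ $u=v$) and $v\in C_c^{\infty}(U)$, a direct expansion yields
\[
R[g]-R[\bar g] \;=\; \tfrac{4}{n-2}\,Lv \;+\; \mathcal{Q}(v),
\]
where $Lv = -(n-1)\Delta v - R[\bar g]\,v$ and $\mathcal{Q}$ collects terms of quadratic or higher order in $v,\nabla v,\nabla^{2}v$. Tracing the static equation \eqref{static_2} gives $Lf\equiv 0$ on $M$, the same identity used in the rigidity half. For any $v\in C_c^{\infty}(\Omega)$, integration by parts forces $\int_M f\,Lv\,d\textup{vol}[\bar g]=0$, so no non-trivial $v$ whose support lies entirely in $\{f>0\}$ or entirely in $\{f<0\}$ can satisfy $Lv\geq 0$; this obstruction is exactly what forces the perturbation's support to straddle $\partial\Omega^+$, motivating the choice of $p$.

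Following \cite[Theorem 5 and Lemma 21]{B-M-N}, the heart of the argument is to produce a single smooth $v\in C_c^{\infty}(U)$ with $Lv\geq 0$ throughout $M$ and $Lv>0$ on an open subset where the quadratic remainder $\mathcal{Q}$ will not reverse the sign. The useful identity, which follows directly from $Lf=0$, is
\[
L(\chi f) \;=\; -(n-1)\bigl(f\,\Delta\chi + 2\,\nabla\chi\cdot\nabla f\bigr)
\]
for every smooth $\chi$, equivalently $L(\chi f)=-(n-1)f^{-1}\,\textup{div}(f^{2}\nabla\chi)$ on $\{f\neq0\}$. Combining a cut-off $\chi$ localized around $p$ with an additional corrective bump supported in $\{f\neq 0\}$ and exploiting the transversal sign change of $f$ at $p$ gives an ansatz whose $L$-image has the prescribed sign, and the global constraint $\int f\,Lv=0$ is automatically met.

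With such $v$ in hand, I would set $v_{\varepsilon}=\varepsilon v$ and $g_{\varepsilon}=(1+v_{\varepsilon})^{4/(n-2)}\bar g$ (with the analogous exponential formula in dimension two). Then $R[g_{\varepsilon}]-R[\bar g] = \tfrac{4\varepsilon}{n-2}\,Lv + O(\varepsilon^{2})$ uniformly on $M$, so for all sufficiently small $\varepsilon > 0$ the first-order positivity of $Lv$ on the designated open subset dominates the quadratic error, producing $R[g_{\varepsilon}]\geq R[\bar g]$ on $\Omega$ with strict inequality at some interior point of $U$, while $\textup{supp}(g_{\varepsilon}-\bar g)\subset U\subset\Omega$ is automatic. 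The main obstacle is the construction of $v$: although the integral obstruction $\int f\,Lv=0$ is resolved by the sign change of $f$ near $p$, arranging $Lv\geq 0$ \emph{pointwise} with enough strictness to dominate $\mathcal{Q}(v_{\varepsilon})$ on the set where strict inequality of $R$ is claimed requires the delicate asymptotic analysis carried out in \cite{B-M-N}.
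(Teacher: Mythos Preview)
Your proposal has a genuine gap: the central object, a single $v\in C_c^{\infty}(U)$ with $Lv\ge 0$ everywhere and $Lv>0$ on an open set, is never constructed. You defer this to ``the delicate asymptotic analysis carried out in \cite{B-M-N}'', but \cite[Theorem~5 and Lemma~21]{B-M-N} do not produce such a compactly supported supersolution. Theorem~5 there is a \emph{gluing} statement that smooths two metrics agreeing along a hypersurface with a favourable mean-curvature gap; Lemma~21 constructs a one-sided collar metric. Neither yields a $v$ with $Lv\ge0$ on all of $M$. Even if such a $v$ existed, your smallness argument is incomplete: near $\partial(\operatorname{supp}v)$ the linear term $Lv$ tends to zero, and you give no mechanism ensuring it dominates the quadratic remainder $\mathcal{Q}(\varepsilon v)$ there, so $R[g_\varepsilon]\ge R[\bar g]$ does not follow from $Lv\ge0$ alone.

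The paper's argument is structurally different and sidesteps both difficulties. It works not in a small ball near one boundary point but on the full sub-level set $\Omega^{-\epsilon}=\{f>-\epsilon\}\subset\Omega$, and it uses the explicit, \emph{non}-compactly-supported conformal factor $u_t=1-t(f+\epsilon)$. Because $\Delta f=-\tfrac{R[\bar g]}{n-1}f$, a one-line expansion gives $R[g_t]=R[\bar g]+c\,\epsilon\,R[\bar g]\,t+O(t^2)$ with $c>0$ \emph{uniformly} on $\overline{\Omega^{-\epsilon}}$, together with $g_t=\bar g$ and $H[g_t]>H[\bar g]$ on $\partial\Omega^{-\epsilon}$; no pointwise sign analysis of a cutoff is needed. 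Separately, on the collar $\{-\epsilon<f<0\}$ one builds, following \cite[Lemma~21]{B-M-N}, a conformal metric $\tilde g$ with factor $1-e^{-1/(f+\epsilon)}$ that is smoothly equal to $\bar g$ outside $\Omega^{-\epsilon}$ and has $R[\tilde g]>R[\bar g]$ in the collar. The two pieces are then merged across $\partial\Omega^{-\epsilon}$ by the gluing theorem \cite[Theorem~5]{B-M-N}, which preserves both the scalar-curvature lower bound and the conformal class. Thus the scalar-curvature increase is obtained piecewise by explicit formulas, and the transition to a globally smooth metric is handled by gluing rather than by forcing a single compactly supported perturbation to have $Lv\ge0$.
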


\begin{proof} First, by \cite[Theorem 1]{F-M} , we know $\nabla f\neq 0$ on $\partial\Omega^+$. Hence we may assume 
the level set $\Omega^{-\epsilon} = \{x\in M^n: f(x) > -\epsilon\}\subset \Omega$, at least for sufficiently small $\epsilon$, is pre-compact in $M^n$. 
In $\Omega^{-\epsilon}$, we consider $u_t = 1 - t(f +\epsilon) >0$ on $M$ for $t\in [0, \delta]$ and
sufficiently small $\delta >0$, and the family of conformal deformations 
$$g_t = u_t^{\frac{4}{n-2}} \bar{g} \text{ when $n\geq 3$ and } g_t = u^2_t \bar g \text{ when $n=2$, for } t \in[0, \delta).$$ 
Then, when $n\geq 3$,  we calculate the expansion of scalar curvature, in $\Omega^{-\epsilon}$,
\begin{align*}
R[g_t] &= u_t^{-\frac{n+2}{n-2}} (R_{\bar{g}} u_t - \frac {4(n-2)}{n-1}\Delta u_t)\\
&= R_{\bar{g}} - \frac {4(n-1)}{n-2} \left( \Delta u_t + \frac{R[\bar{g}]}{n-1} u_t \right) t + O(t^2).\\ 
& = R_{\bar{g}} + \frac{4(n-2)}{(n-1)^2} \epsilon R[\bar{g}] t+ O (t^2)
\end{align*}
and  the expansion of the mean curvature of $\partial\Omega^{-\epsilon}$, 
\begin{align*}
H[g_t] = H_{\bar{g}} + \frac{2(n-1)t}{n-2} |\nabla f |.
\end{align*}
Similarly, when $n=2$, we have $R[g_t] = R[\bar g] + 2\epsilon t R[\bar g] + O(t^2)$ and $ H[g_t] = H[\bar g] +  t|\nabla f|$.
Hence we constructed a conformal metric $g_t$ in $\Omega^{-\epsilon}$ for some sufficiently small $t$ such that
\begin{itemize}
\item  $R[g_t]  > R[\bar{g}]$ in $\Omega^{-\epsilon}$;
\item $g_t = \bar g$ on $\partial\Omega^{-\epsilon}$;
\item $H[g_t] > H[\bar g]$ on $\partial\Omega^{-\epsilon}$.
\end{itemize}
Next we want to use \cite[Theorem 5]{B-M-N} to construct a smooth metric $g\in[\bar g]$ so that 
\begin{itemize}
\item  $R[g]  \geq R[\bar{g}]$ in $\Omega$, $R[g] > R[\bar g]$ at some point in $\Omega$, and
\item $\textup{supp}(g - \bar g)\subset \Omega$.
\end{itemize}
To do so we need a smooth metric in the neighborhood of $\partial\Omega^{-\epsilon}$ that smoothly extends to outside $\Omega^{-\epsilon}$. 
Based the same idea used in \cite[Lemma 21]{B-M-N}, we consider
$$
\tilde g = w^\frac 4{n-2} \bar g \text{ when $n\geq 3$ and }  \tilde g = w^2 \bar g \text{ when $n=2$}, 
$$
for $w = 1- e^{-\frac 1{f+\epsilon}}$ in $\Omega^{-\epsilon}_0 = \{x\in M^n: -\epsilon < f(x) < 0\}$. Similar calculations as in the proof of \cite[Lemma 21]{B-M-N} show that
\begin{itemize}
\item $R[\tilde g]> R[\bar g]$ in $\Omega^{-\epsilon}_0$ and
\item $\textup{supp}(\tilde g - \bar g)\subset\Omega$.
\end{itemize} 
Particularly, $H[\tilde g] = H[\bar g]$ at $\partial\Omega^{-\epsilon}$. Now, applying \cite[Theorem 5]{B-M-N} to glue $g_t$ and $\tilde g$ on $\Omega^{-\epsilon}$,
we get a smooth metric $g$ that finishes the proof,  so long as one realizes that $g$ stays in $[\bar g]$ if both $\tilde g$ and $g_t$ are conformal to $\bar g$ due to the construction
of the gluing in \cite{B-M-N}.
\end{proof}


\bibliographystyle{amsplain}

\end{document}